\newtheorem{theorem}{Theorem}[section]
\newtheorem{proposition}[theorem]{Proposition}
\newtheorem{lemma}[theorem]{Lemma}
\newtheorem{corollary}[theorem]{Corollary}
\theoremstyle{definition}
\newtheorem{definition}[theorem]{Definition}
\newtheorem{claim}{Claim}
\newtheorem{conjecture}{Conjecture}[section]
\newtheorem{question}{Question}
\begin{document}
\title{\bf Integer colorings with no rainbow $k$-term arithmetic progression}
\author{Hao Lin \thanks{School of Mathematics, Shandong University,
Jinan, China, Email: \texttt{lhao17@mail.sdu.edu.cn}.}
\quad Guanghui Wang\thanks{School of Mathematics, Shandong University,
Jinan, China, Email: \texttt{ghwang@sdu.edu.cn}, supported by Natural Science Foundation of China (11871311) and Young Taishan Scholars Program of Shandong Province (201909001).}
\quad Wenling Zhou\thanks{School of Mathematics, Shandong University, Jinan, China, and  Laboratoire Interdisciplinaire des Sciences du Num\'{e}rique, Universit\'{e} Paris-Saclay, France. Email: \texttt{gracezhou@mail.sdu.edu.cn}.}}
\date{}
\maketitle

\begin{center}
\begin{minipage}{130mm}
\small\noindent{\bf Abstract:}
In this paper, we study the rainbow Erd\H{o}s-Rothschild problem with respect to $k$-term arithmetic progressions. For a set of positive integers $S \subseteq [n]$,
an $r$-coloring of $S$ is \emph{rainbow $k$-AP-free} if it contains no rainbow $k$-term arithmetic progression. Let $g_{r,k}(S)$ denote the number of rainbow $k$-AP-free $r$-colorings of $S$. For sufficiently large $n$ and fixed integers $r\ge k\ge 3$, we show that $g_{r,k}(S)<g_{r,k}([n])$ for any proper subset $S\subset [n]$. Further, we prove that $\lim_{n\to \infty}g_{r,k}([n])/(k-1)^n= \binom{r}{k-1}$. Our result is asymptotically best possible and implies that, almost all rainbow $k$-AP-free $r$-colorings of $[n]$ use only $k-1$ colors.

\smallskip
\noindent{\bf Keywords:} Integer coloring; arithmetic progression; container method

\end{minipage}
\end{center}

\section{Introduction}
Let $k$ and $n$ be positive integers and $[n]:= \{1, 2, \dots, n\}$.
Given a subset $S\subseteq [n]$, a \textit{$k$-term arithmetic progression} ($k$-AP for short) in $S$ is a set of distinct elements of the form
$a, a+d, a+2d, \dots, a+(k-1)d$ with
difference $d\in [n]$.
For a positive integer $r$ and a set $S$, an \textit{$r$-coloring} of $S$ is a mapping $\chi : S \to [r]$, which assigns one color to each element of $S$. We say such an
$r$-coloring is \textit{exact} if $\chi$ is surjective.
A $k$-AP in $S$ is called \textit{rainbow} (or \textit{monochromatic}) if all of its elements receive distinct (or same) colors under an $r$-coloring of $S$.

Arithmetic progressions play an important role in number theory.
The well-known van der Waerden's Theorem~\cite{1927Beweis} in Ramsey theory states that, for every $r$ and $k$ and sufficiently
large $n$, every $r$-coloring of $[n]$ contains a monochromatic $k$-AP.
A strengthening of this result was conjectured by Erd\H{o}s and Tur\'{a}n~\cite{E-T-1936} in 1936, and finally proved by Szemer\'{e}di~\cite{SE-k-AP} in 1975.

\begin{theorem}{\rm (Szemer\'{e}di's Theorem~\cite{SE-k-AP})}\label{Sz-thm}
For a positive integer $k$ and a real number $\alpha >0$, there exists a positive integer $sz(k, \alpha)$ such that for all $n\ge sz(k, \alpha)$, every subset $S \subseteq [n]$ with $|S| \ge \alpha n$ contains a $k$-AP.
\end{theorem}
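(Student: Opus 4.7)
The plan is to prove this by a \emph{density increment} strategy: show that if $S\subseteq[n]$ has density at least $\alpha$ and contains no $k$-AP, then there exists a subprogression $P\subseteq[n]$ of length tending to infinity with $n$ on which the density of $S$ is at least $\alpha+f(\alpha)$ for some positive function $f$. Iterating such an increment forces the density above $1$ after $O_\alpha(1)$ steps, a contradiction that bounds $n$ in terms of $k$ and $\alpha$. As a first technical step I would transfer the problem to the cyclic group $\mathbb{Z}_N$ for a prime $N=\Theta(kn)$, which preserves both density and the count of non-trivial $k$-APs up to constants and unlocks character methods.

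For the base case $k=3$ (Roth's theorem) I would expand the $3$-AP count $\sum_{x,d}1_S(x)\,1_S(x+d)\,1_S(x+2d)$ via Fourier inversion on $\mathbb{Z}_N$. The absence of non-trivial $3$-APs forces $1_S-\alpha$ to carry a large non-trivial Fourier coefficient $\widehat{1_S}(\xi)$; a standard Bohr-set partition then converts this linear bias into the required density increment on a genuine arithmetic progression.

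For general $k$, linear Fourier analysis is insufficient, since quadratic phases such as $e(\beta x^2/N)$ are balanced with respect to every character yet bias $4$-AP counts. I would instead invoke Gowers' uniformity norms $U^{k-1}$ and proceed in two steps: (i) a generalised von Neumann lemma showing that $\|1_S-\alpha\|_{U^{k-1}}$ small implies $S$ contains the expected number of $k$-APs; (ii) an inverse theorem (Gowers for $k=4$, Green--Tao--Ziegler in general) asserting that if $\|1_S-\alpha\|_{U^{k-1}}$ is large, then $S$ correlates with a nilsequence of step $k-2$, from which equidistribution on the underlying nilmanifold yields a density increment on an arithmetic subprogression.

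The main obstacle, and indeed where essentially all the depth of the result sits, is the inverse theorem for the higher Gowers norms: it requires a substantial machinery of nilpotent Lie groups and quantitative equidistribution on nilmanifolds, and is by far the most delicate ingredient. An alternative that sidesteps nilsequences is Szemer\'edi's original combinatorial route via the regularity lemma, generalised to the hypergraph setting by Gowers and by Nagle--R\"odl--Schacht--Skokan; there the technical burden is transferred to establishing a counting lemma for regular $(k-1)$-uniform hypergraphs encoding $k$-APs.
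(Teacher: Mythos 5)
The first thing to note is that the paper does not prove this statement at all: it is Szemer\'edi's theorem, quoted from the literature and used later as a black box (directly in the proof of Theorem~\ref{r-extremal}, and indirectly through the Varnavides-type supersaturation result, Theorem~\ref{supersaturation}). So the only ``proof'' the paper offers is the citation, and the faithful treatment in this context is to cite it, not to reprove it.

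Your proposal, as written, is a roadmap rather than a proof: every step that carries real content is delegated to a theorem at least as deep as the statement itself. For $k=3$ you sketch Roth's argument but do not carry out the Fourier computation, the handling of wrap-around progressions after embedding $[n]$ in $\mathbb{Z}_N$, the Parseval step extracting a large non-trivial coefficient from the absence of $3$-APs, or the quantitative Bohr-set-to-progression conversion that actually produces the density increment. For general $k$ you invoke the generalised von Neumann lemma, the inverse theorem for the $U^{k-1}$ norms, and quantitative equidistribution on nilmanifolds --- which you yourself identify as the locus of ``essentially all the depth of the result'' --- without proving or even precisely stating them, and the passage from correlation with a nilsequence to a density increment on a genuine arithmetic subprogression is itself a substantial argument (the factorisation and equidistribution theory of nilsequences) that you leave untouched; the alternative hypergraph-regularity route is likewise named rather than executed, with the counting lemma assumed. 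Nothing is quantified: neither the increment function $f(\alpha)$, nor the length of the subprogression, nor the iteration bound that turns the increment into a bound on $sz(k,\alpha)$. So there is a genuine gap, and it is coextensive with the theorem itself; in the setting of this paper the correct move is simply the citation the authors give.
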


The anti-Ramsey (more precisely, anti-van der Waerden) properties of arithmetic progressions have received great attention as well (see \cite{anti-r-AP, anti2003} and references therein).
Given a positive integer $k$ and a set $S$, the \textit{anti-van der Waerden number} $aw(S, k)$ is the smallest $r$ such that every exact $r$-coloring of $S$ contains a rainbow $k$-AP.
In~\cite{anti-r-AP}, Butler et al. proved that for sufficiently large $n$, $aw([n], 3)=\Theta(\log n)$ and $aw([n], k)=n^{1-o(1)}$ for $k\ge 4$.
Note that this definition of $aw(S, k)$ implies that
there are always $r$-colorings of $S$ with no rainbow $k$-AP for all $r<aw(S, k)$.
An $r$-coloring of $S$ is called \textit{rainbow $k$-AP-free} if it contains no rainbow $k$-AP.

Following a recent trend of working on problems about counting certain colorings and analyzing extremal structure of them, for positive integers $r$, $k$ and set $S\subseteq [n]$, we are interested in the problem of counting the number, $g_{r,k}(S)$, of
rainbow $k$-AP-free $r$-colorings of $S$.
In addition, we naturally want to know, whether the behavior of $g_{r,k}([n])$ is similar to that of $aw([n], k)$, which is different for $k=3$ and  for $k\ge 4$.

The related problem of counting colorings of discrete objects with certain properties, was initiated by Erd\H{o}s-Rothschild~\cite{E-R1974} who
were interested in $n$-vertex graphs that admit the maximum number of $r$-edge-colorings
without a given monochromatic subgraph, which has motivated a number of results (see, e.g.,~\cite{m-cliques2004,other-coloer2006,m-subgraphs2014,m-cliques2017}).
For the Erd\H{o}s-Rothschild problems on other discrete structures,
we refer to~\cite{linear-hypergraphs} for $k$-uniform hypergraphs, to~\cite{in-set-families2018} for intersecting set families, to~\cite{chain2019} for $k$-chains and to~\cite{sum-set-2018,sum-set2021} for sum-free sets.
Meanwhile, the \emph{rainbow Erd\H{o}s-Rothschild problem} has received considerable attention, which studies the maximum number of $r$-edge-colorings of $n$-vertex graphs without a given \textit{rainbow} subgraph whose edges are assigned distinct colors.
For example, in~\cite{rainbow-triangle2019, rainbow-triangle2020, rainbow-triangle2017}, they studied the number of $r$-edge-colorings of graphs avoiding rainbow triangles. For further results along this line of research, we refer to~\cite{r-E-R-problem} and the references therein.
Recently, the rainbow Erd\H{o}s-Rothschild problem has also been extended to other discrete structures. Cheng et al.~\cite{2020Integer} studied the number of rainbow sum-free colorings of integers.
Li, Broersma and Wang~\cite{2021Integer} considered the problem for $3$-AP.

In this paper, we extend the results of 3-AP in~\cite{2021Integer} and study rainbow Erd\H{o}s-Rothschild problem with respect to $k$-AP.
For $r\in[k-1]$, we trivially have $g_{r,k}(S)=r^{|S|}$ for all $S\subseteq [n]$.
Hence $[n]$ is the unique set admitting the maximum number of rainbow $k$-AP-free $r$-colorings for all $k\ge 3$ and $1<r <k$.
For $r \ge k$, the research on $g_{r,k}(S)$ requires substantially more work.
In addition, note that a \textit{sum} in $S$ is a solution of the equation $x+y=z$, and a $3$-AP in $S$ can be regarded as a solution of $x+y=2z$.
Therefore, when we intend to count the number of rainbow $3$-AP-free (or sum-free) colorings of $S$, we can find some required $3$-APs (or sums) by using the ordered pairs in $S$ (see~\cite{2020Integer,2021Integer}).
Unfortunately, for $k\ge 4$, there is no such linear equation to help us find $k$-APs that are needed.
Therefore, we will introduce some new ideas in the proof.

For all $r \ge k$ and $S\subseteq [n]$, let $f^{<k}(r,S)$ denote the number of $r$-coloring of $S$ using at most $k-1$ colors. Then we have a lower bound on $g_{r,k}(S)$ as follows.

\begin{theorem}\label{l-bound}
For all $r \ge k\ge 2$ and $S\subseteq [n]$ with $|S|\ge k$,
\[
g_{r,k}(S)\ge f^{<k}(r,S)= \sum_{t=1}^{k-1}\bigg(t^{|S|}\sum_{j=t}^{k-1}\binom{r}{j}\binom{j}{j-t}(-1)^{j-t}\bigg).
\]
\end{theorem}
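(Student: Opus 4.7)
The plan has two parts: first establish the inequality $g_{r,k}(S)\ge f^{<k}(r,S)$ by exhibiting an injection, and then verify the closed-form expression for $f^{<k}(r,S)$ by a standard inclusion-exclusion bookkeeping.

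For the inequality, I would observe that any $r$-coloring $\chi : S \to [r]$ which uses at most $k-1$ distinct colors cannot contain a rainbow $k$-AP, simply because a rainbow $k$-AP requires $k$ pairwise distinct colors on its $k$ elements, whereas the image $\chi(S)$ has size at most $k-1$. Hence every one of the $f^{<k}(r,S)$ colorings using at most $k-1$ colors is rainbow $k$-AP-free, which yields $g_{r,k}(S)\ge f^{<k}(r,S)$. Note that this step uses no structural property of $S$ beyond $|S|\ge k$ (needed only so that a $k$-AP could in principle exist; the bound is otherwise vacuous).

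For the closed form, I would partition the colorings counted by $f^{<k}(r,S)$ according to the exact number $j$ of colors used, $1\le j\le k-1$. For each such $j$, choose the $j$-subset of colors in $\binom{r}{j}$ ways, then count the number of \emph{surjective} maps $S\to[j]$ via the standard inclusion-exclusion formula $\sum_{t=0}^{j}(-1)^{j-t}\binom{j}{t}t^{|S|}$. Combining these, one obtains
\[
f^{<k}(r,S)=\sum_{j=1}^{k-1}\binom{r}{j}\sum_{t=1}^{j}(-1)^{j-t}\binom{j}{t}t^{|S|}.
\]
Swapping the order of summation (so that $t$ ranges over $1,\dots,k-1$ and $j$ over $t,\dots,k-1$) and using the symmetry $\binom{j}{t}=\binom{j}{j-t}$, one arrives exactly at the expression in the statement.

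There is no real obstacle here: the inequality is an immediate pigeonhole-type observation, and the identity is a routine inclusion-exclusion. The only point that needs care is the re-indexing in the double sum, and checking that the $j=0$ and $t=0$ boundary terms vanish so that the ranges match the formula in the statement.
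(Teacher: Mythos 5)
Your proof is correct and follows essentially the same route as the paper: both arguments note that any coloring of $S$ using at most $k-1$ colors is automatically rainbow $k$-AP-free, decompose $f^{<k}(r,S)$ by the exact number $j$ of colors used (choosing the color set in $\binom{r}{j}$ ways and counting surjections $S\to[j]$), and then re-index and swap the double sum to arrive at the stated closed form. The only minor difference is that you invoke the standard inclusion-exclusion formula for the number of surjections directly, whereas the paper proves that identity (its Proposition~\ref{f(r,S)}) by induction on $r$; this does not change the structure of the argument.
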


For the upper bound, we first estimate the number of rainbow $k$-AP-free $r$-colorings of dense sets.

\begin{theorem}\label{up-bound-dense}
For all integers $r \ge k\ge 3$ and any real number $0\leq \xi \leq r^{-3}/2$, there exists an integer $n_0$ such that the following holds for $n\ge n_0$. If $S\subseteq [n]$ with $|S|= (1-\xi)n$, then
\[
g_{r,k}(S)\le \binom{r}{k-1}(k-1)^{|S|}+(k-1)^{-\frac{1-\log(k-2)\cdot \log^{-1}(k-1)}{8k^3\log n}n} (k-1)^n.
\]
\end{theorem}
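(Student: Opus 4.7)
The plan is to apply the hypergraph container method to an auxiliary $k$-uniform hypergraph $\mathcal{H}$ with vertex set $V(\mathcal{H})=S\times[r]$, whose edges are all $k$-sets $\{(a_1,c_1),\dots,(a_k,c_k)\}$ with $a_1,\dots,a_k$ a $k$-AP in $S$ and $c_1,\dots,c_k\in[r]$ pairwise distinct. Every $r$-coloring $\chi\colon S\to[r]$ corresponds to a transversal $T_\chi=\{(a,\chi(a)):a\in S\}$, and $\chi$ is rainbow $k$-AP-free precisely when $T_\chi$ is $\mathcal{H}$-independent. The extremal $\mathcal{H}$-independent sets are exactly the canonical configurations $S\times L$ for $(k-1)$-subsets $L\subseteq[r]$; summing their $(k-1)^{|S|}$ transversals over the $\binom{r}{k-1}$ choices of $L$ is what yields the main term $\binom{r}{k-1}(k-1)^{|S|}$.

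The first technical ingredient is a supersaturation lemma for $\mathcal{H}$: if $W\subseteq V(\mathcal{H})$ satisfies $|W|\ge(k-1)|S|+\epsilon|S|$, then $W$ spans $\Omega(n^2)$ edges of $\mathcal{H}$ (with the implicit constant depending on $\epsilon$ and $r$). To prove this, set $A=\{a\in S:|\{c:(a,c)\in W\}|\ge k\}$; a degree-sum argument gives $|A|\ge\epsilon|S|/(r-k+1)$, so by Szemer\'edi's theorem (Theorem~\ref{Sz-thm}) together with a Varnavides-style supersaturation, $A$ contains $\Omega(n^2)$ $k$-APs. At each such $k$-AP one greedily picks pairwise distinct colors from $W$ at the $k$ vertices (each vertex has at least $k$ available colors in $W$), producing an edge of $\mathcal{H}$ inside $W$. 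Feeding this supersaturation into the Balogh-Morris-Samotij / Saxton-Thomason container theorem produces a family $\mathcal{C}$ of containers, of suitably controlled cardinality, such that $|C|\le(k-1)|S|+|S|/\operatorname{polylog}(n)$ for each $C\in\mathcal{C}$, and every rainbow $k$-AP-free coloring of $S$ is a transversal inside some $C\in\mathcal{C}$.

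Within each container $C$, the number of transversals is $\prod_{a\in S}d_C(a)$ subject to $\sum_{a\in S}d_C(a)=|C|$, where $d_C(a)=|\{c:(a,c)\in C\}|$. Containers close to the canonical form $S\times L$ together contribute the main term $\binom{r}{k-1}(k-1)^{|S|}$, while non-canonical containers are bounded by an AM-GM/concavity estimate: a column with $d_C(a)=k-2$ (instead of the canonical $k-1$) costs a factor roughly $\log(k-2)/\log(k-1)$ in transversals, which directly explains the factor $1-\log(k-2)/\log(k-1)$ in the stated error exponent. The main obstacle is tuning the container-lemma parameter $\beta\approx 1/\operatorname{polylog}(n)$ (equivalently the extra-density parameter $\epsilon$) so that the product of (number of non-canonical containers) and (maximum transversals per non-canonical container) is at most the claimed error $(k-1)^{n-\frac{1-\log(k-2)/\log(k-1)}{8k^3\log n}n}$; this delicate balance between the number of containers and their sparsity is the heart of the argument and is where the $1/\log n$ savings in the exponent arises.
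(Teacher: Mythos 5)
Your overall architecture matches the paper's: encode colorings as transversals/subtemplates of a $k$-uniform rainbow-$k$-AP hypergraph on an $[n]\times[r]$ (you use $S\times[r]$, a harmless variant) vertex set, apply the container theorem, and separate canonical containers from the rest. Your supersaturation lemma for $\mathcal{H}$ (via the degree-sum argument giving $|A|\ge\epsilon|S|/(r-k+1)$ and then Szemer\'edi/Varnavides on $A$) is sound, and the intuition that the constant $1-\log(k-2)/\log^{-1}(k-1)$ reflects the ``cost'' of a $(k-2)$-palette column is essentially the right heuristic for the choice of $\delta$.

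However, there is a genuine gap in the reduction from containers to the bound, and it is exactly where the paper does the most work. The size bound $|C|\le(k-1)|S|+|S|/\mathrm{polylog}(n)$ together with the per-column AM-GM estimate $\prod_a d_C(a)$ does \emph{not} isolate $\binom{r}{k-1}$ canonical configurations. A container in which every column has $d_C(a)=k-1$ but with \emph{varying} $(k-1)$-palettes across columns satisfies your size bound with room to spare and has $\prod_a d_C(a)=(k-1)^{|S|}$, and the container theorem permits up to $\exp\bigl(cn^{(k-1)/k}\log^2 n\bigr)$ such containers; summing transversal counts over them (as your plan does) vastly overshoots $\binom{r}{k-1}(k-1)^{|S|}$ plus the stated error. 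The size/AM-GM viewpoint simply cannot see the \emph{cross-column} palette constraint. The paper's proof instead uses the edge-count guarantee $R_k(P)<k^{-1}n^{2-1/k}$ directly in a stability step (Lemma~\ref{k-1 palette number}): if two distinct $(k-1)$-palettes each appear on $\Omega(n)$ columns of a dense container, an Algorithm-\ref{algorithm 1}-style greedy count produces $\Omega(n^2)$ rainbow $k$-AP subtemplates, a contradiction; hence $(1-1/(3k^3\log n))n$ columns share a single dominant palette $C_0$. It then bounds the \emph{union} $g(\mathcal{P}_{C_0},S)$ over all containers with dominant palette $C_0$ by counting colorings directly (Lemma~\ref{P-k-1-bound}): any coloring that assigns some $a'$ a color outside $C_0$ forces a repeated color in every $k$-AP through $a'$ whose other $k-1$ terms lie in the $C_0$-region, and there are $\Omega(n/k^3)$ disjoint such $k$-APs, giving the exponential saving. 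Without a step of this kind---using the rainbow-free property of the colorings themselves, not just a container-size bound---your ``sum over containers of transversals per container'' plan cannot reach the claimed upper bound.
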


Now we turn to the extremal configurations of rainbow $k$-AP-free $r$-colorings for $r\ge k$.

\begin{theorem}\label{r-extremal}
For all integers $r \ge k\ge 3$, there exists an integer $N$ such that for all $n\ge N$ and any proper subset $S\subset [n]$, we have $g_{r,k}(S)<g_{r,k}([n])$.
Moreover, we have
\[
\lim_{n\to \infty} \frac{g_{r,k}([n])}{(k-1)^n}=\binom{r}{k-1}.
\]

\end{theorem}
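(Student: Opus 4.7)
The plan decomposes in two halves. First, with $S=[n]$, Theorems~\ref{l-bound} and~\ref{up-bound-dense} sandwich $g_{r,k}([n])$ around $\binom{r}{k-1}(k-1)^n$, settling the limit. Second, for extremality I split on the density of $S\subsetneq[n]$: in a very sparse regime the trivial bound $g_{r,k}(S)\le r^{|S|}$ is already beaten by $g_{r,k}([n])$, while in the positive-density regime a version of Theorem~\ref{up-bound-dense} yields $g_{r,k}(S)\le\binom{r}{k-1}(k-1)^{|S|}(1+o(1))$, which is also beaten since $|S|\le n-1$.

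For the limit, Theorem~\ref{l-bound} at $[n]$ gives
\[
g_{r,k}([n])\ \ge\ f^{<k}(r,[n])\ =\ \binom{r}{k-1}(k-1)^n+O\bigl((k-2)^n\bigr),
\]
because the $t=k-1$ summand of the inclusion--exclusion dominates and all smaller-$t$ summands have geometric base $\le k-2$. Theorem~\ref{up-bound-dense} with $\xi=0$ supplies the reverse inequality modulo the additive error $(k-1)^{n-c(k)\,n/\log n}=o((k-1)^n)$, where $c(k):=(1-\log(k-2)/\log(k-1))/(8k^3)>0$. Dividing by $(k-1)^n$ and letting $n\to\infty$ yields $\binom{r}{k-1}$.

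For the extremality, fix a constant $\alpha_0<\log(k-1)/\log r$, so that $r^{\alpha_0}<k-1$. If $|S|\le\alpha_0 n$ then $g_{r,k}(S)\le r^{\alpha_0 n}=o((k-1)^n)$, which is strictly below the lower bound $\binom{r}{k-1}(k-1)^n(1-o(1))\le g_{r,k}([n])$ established in the previous paragraph. If instead $|S|>\alpha_0 n$, the target bound
\[
g_{r,k}(S)\ \le\ \binom{r}{k-1}(k-1)^{|S|}(1+o(1))
\]
combined with $|S|\le n-1$ gives $g_{r,k}(S)\le\binom{r}{k-1}(k-1)^{n-1}(1+o(1))$, which falls below $g_{r,k}([n])$ by a positive gap of order $\binom{r}{k-1}(k-2)(k-1)^{n-1}$ once $n$ is large.

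The main obstacle is that the displayed upper bound in the previous paragraph is not stated in this generality: Theorem~\ref{up-bound-dense} as stated only covers $|S|\ge(1-r^{-3}/2)n$, while the case analysis needs it for all $|S|\ge\alpha_0 n$. To bridge this, I would rerun the container-method proof of Theorem~\ref{up-bound-dense} at arbitrary positive density, feeding in Szemer\'edi supersaturation of $k$-APs in positive-density subsets of $[n]$ in place of the near-full-density input used for the stated version. Producing this density-uniform refinement of Theorem~\ref{up-bound-dense} is the technical heart of the extremality argument; once it is in hand the case analysis above is immediate, and combined with the limit computation both assertions of the theorem follow for $n\ge N$ for some sufficiently large $N=N(r,k)$.
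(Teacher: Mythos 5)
Your treatment of the limit (Theorem~\ref{l-bound} plus Theorem~\ref{up-bound-dense} at $S=[n]$), of very sparse $S$ via the trivial bound $r^{|S|}$, and of near-full $S$ with $|S|\ge(1-r^{-3}/2)n$ (where Theorem~\ref{up-bound-dense} applies as stated) coincides with the paper. The gap is exactly the piece you defer: the middle range $\alpha_0 n<|S|<(1-r^{-3}/2)n$, which you propose to handle with a ``density-uniform'' refinement asserting $g_{r,k}(S)\le\binom{r}{k-1}(k-1)^{|S|}(1+o(1))$ for all positive densities, obtained by rerunning the container argument. That refinement is not only unproved, it is false as stated. Take $c<1/(k+1)$ and $S=[1,cn]\cup[(1-c)n,n]$: any $k$-AP with two points in one of the intervals has common difference at most $cn$, hence span at most $(k-1)cn<(1-2c)n$, so no $k$-AP meets both intervals. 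Choosing a $(k-1)$-palette independently on each interval gives at least $\binom{r}{k-1}^{2}(k-1)^{|S|}(1-o(1))$ rainbow $k$-AP-free $r$-colorings, exceeding $\binom{r}{k-1}(k-1)^{|S|}(1+o(1))$ since $\binom{r}{k-1}\ge k$; for many pairs $(r,k)$ (e.g.\ $r$ large compared with $k$) this density lies inside the range your case analysis needs. This also explains why the container proof cannot simply be rerun at constant density: Lemmas~\ref{x_3}--\ref{P-k-1-bound}, and in particular the conclusion that all but $o(n)$ elements carry one common $(k-1)$-palette, rely on $|[n]\setminus X_2|\le r^{-3}n$ being tiny so that Algorithm~\ref{algorithm 1} retains $\Omega(n)$ admissible common differences; once $S$ omits a constant fraction of $[n]$ that mechanism, and the single-palette structural conclusion itself, break down. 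Supersaturation alone does not repair this.

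The paper bridges the middle range with a different, much lighter idea, and this is the ingredient your plan is missing. Fix the constant $q=n_0$ from Theorem~\ref{up-bound-dense} and apply Szemer\'edi's Theorem (Theorem~\ref{Sz-thm}) with $\alpha=\frac{1}{3r^{3}\log r}$ repeatedly to extract at least $(\gamma-\alpha)n/q$ pairwise disjoint $q$-APs inside $S$. A $q$-AP $Q$ is isomorphic to $[q]$ with respect to its $k$-APs, so $g_{r,k}(Q)=g_{r,k}([q])<(k-1)^{q+(k-1)\log r}$ by the dense theorem applied at the constant scale $q$. Since a rainbow $k$-AP-free coloring of $S$ restricts to one on each $Q$, multiplying these bounds over the disjoint $q$-APs and coloring the at most $\alpha n$ leftover elements arbitrarily gives $g_{r,k}(S)<(k-1)^{n}<g_{r,k}([n])$ --- far weaker than the bound you aimed for, but amply sufficient against the lower bound $\bigl(\binom{r}{k-1}-o(1)\bigr)(k-1)^{n}$. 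To complete your outline you should replace the hoped-for refinement by this blocking argument (or prove some correct weaker substitute such as $g_{r,k}(S)\le(k-1)^{|S|+o(n)}$, which in effect requires the same idea).
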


\noindent \textit{Remark}. Theorem~\ref{r-extremal} is trivial for $k=2$, since $g_{r,2}(S)=r$ for all $S\subseteq [n]$ and $r\ge 1$. For $k=3$, it was also proved in~\cite{2021Integer}. Combining the previous discussion for $r\in [k-1]$, for all integers $r\ge 2$, $k\ge 3$ and sufficiently large $n$, $[n]$ is always the unique subset admitting the maximum number of rainbow $k$-AP-free $r$-colorings among all subsets of $[n]$. In particular, almost all rainbow $k$-AP-free $r$-colorings of $[n]$ use at most $k-1$ colors.
\bigskip

\noindent {\bf Organization.} The rest of this paper is organized as follows. In the next section, we will state and prove some useful results on $k$-APs.
Moreover, we will derive a container theorem (see Corollary~\ref{r-k-ap-container}) for rainbow $k$-AP-free $r$-colorings using the hypergraph container method.
In Section~\ref{sec-l-bound}, we give a combinatorial proof of Theorem~\ref{l-bound}. In Section~\ref{pf-u-bound}, we prove some necessary lemmas and give the proof of Theorem~\ref{up-bound-dense}. In Section~\ref{prove-r-extremal}, we will prove Theorem~\ref{r-extremal}. We close the paper with some concluding remarks in Section~\ref{remarks}.

\section{Notation and preliminaries}\label{sec-Preliminaries}
\noindent {\bf Notation.}
In this paper all logarithms are on base 2.
We write $[a, b]$ for the interval $\{a, a + 1, \dots, b\}$. Given a set $S\subseteq [n]$ and an integer $a\in S$, denote by $\Gamma_k(S)$ the number of $k$-APs in $S$ and $\Gamma_k(a^+,S)$ the number of $k$-APs with $a$ as the first term in $S$.
Throughout the paper, we fix that $r$ and $k$ are both positive integers, and we do not consider the $k$-APs with common difference 0.

\subsection {Basic properties of {\it k}-APs in [{\it n}]}
In order to prove Theorem~\ref{up-bound-dense}, we need a supersaturation result for $k$-APs. For arithmetic progressions, the first supersaturation result proved by Varnavides~\cite{varnavides1959certain} is that any subset of $[n]$ of size $\Omega(n)$ contains $\Omega(n^2)$ $3$-APs.
For $k\ge 3$, Shkredov~\cite{shkredov2006szemeredi} formulated a sharpening of Szemer\'{e}di's Theorem due to Varnavides~\cite{varnavides1959certain}.

\begin{theorem}{\rm (\cite[Theorem 18]{shkredov2006szemeredi})}\label{supersaturation}
For every $k\ge 3$ and a real number $\alpha >0$, there exists a positive integer $v(k, \alpha)$ such that for all $n\ge v(k, \alpha)$ and $S \subseteq [n]$, if
$|S| \ge \alpha n$, then $\Gamma_k(S)\ge \frac{\alpha^2}{16v^3(k, \alpha)}n^2$.
\end{theorem}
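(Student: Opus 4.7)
The strategy is the classical averaging argument of Varnavides combined with Szemer\'{e}di's theorem (Theorem~\ref{Sz-thm}) used as a black box. The aim is to exhibit many ``dense'' arithmetic progressions of an intermediate length~$m$ inside $[n]$, each of which is forced to contain a $k$-AP, and then to argue that these $k$-APs cannot coincide too often.

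First, I would invoke Szemer\'{e}di's theorem at density $\alpha/4$ to obtain a threshold $m:=sz(k,\alpha/4)$; this $m$ will play the role of $v(k,\alpha)$ (with, if necessary, a multiplicative constant adjustment to ensure the inequalities below go through for $n\ge v(k,\alpha)$). I would then fix a scale parameter $D:=\lfloor \alpha n/(4m)\rfloor$ and consider the family $\mathcal{P}$ of $m$-APs $\{a,a+d,\ldots,a+(m-1)d\}\subseteq [n]$ whose common difference satisfies $d\in[1,D]$. Because $d\le D$ is small, each element $x$ of the ``interior'' $I:=[mD,\,n-mD]$ belongs to exactly $mD$ members of $\mathcal{P}$: every choice of position $i\in\{0,\ldots,m-1\}$ and every $d\in[1,D]$ yields a valid $m$-AP with $x=a+id$. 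By the choice of $D$ one also has $|S\cap I|\ge |S|-2mD\ge \alpha n/2$, and simultaneously $|\mathcal{P}|\le nD$.

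Next I would run the averaging step. Double-counting element--AP incidences gives
\[
\sum_{P\in\mathcal{P}} |S\cap P| \;\ge\; \sum_{x\in S\cap I} mD \;\ge\; \frac{\alpha\, m\, D\, n}{2}.
\]
Letting $X$ denote the number of $P\in\mathcal{P}$ with $|S\cap P|\ge (\alpha/4)m$ and splitting the sum according to whether $P$ is ``dense'' or not, one obtains
\[
\frac{\alpha m D n}{2} \;\le\; m\cdot X + \frac{\alpha}{4}\, m\cdot nD,
\]
whence $X\ge \alpha n D/4\ge \alpha^{2}n^{2}/(16m)$ provided $n$ is large in terms of $m$ and $\alpha$. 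For every $P$ counted by $X$, the inequality $|S\cap P|\ge(\alpha/4)m$ together with $m\ge sz(k,\alpha/4)$ and the natural identification $P\cong[m]$ lets me apply Szemer\'{e}di's theorem to extract a $k$-AP inside $S\cap P$.

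Finally I would bound the overcounting: each $k$-AP $\{b,b+e,\ldots,b+(k-1)e\}$ is contained in at most $m^{2}$ members of $\mathcal{P}$. A containment is specified by a pair $(d,i_{0})$ with $d\mid e$, $(k-1)e/d\le m-1$ and $i_{0}\in[0,m-1-(k-1)e/d]$, whose number is bounded by $\sum_{q=1}^{\lfloor(m-1)/(k-1)\rfloor}(m-(k-1)q)\le m^{2}/(k-1)\le m^{2}$. Dividing $X$ by this overcount gives $\Gamma_{k}(S)\ge \alpha^{2}n^{2}/(16\,m^{3}) = \alpha^{2}n^{2}/(16\,v(k,\alpha)^{3})$, as required.

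The main obstacle is the calibration of the parameter~$D$. A $D$ that is too small makes $\sum_{P}|S\cap P|$ too small to survive the subtraction of the trivial term $(\alpha/4)m\cdot|\mathcal{P}|$, while a $D$ that is too large shrinks the interior~$I$ so much that $|S\cap I|$ need no longer be $\Theta(\alpha n)$. The choice $D\asymp \alpha n/m$ in tandem with the threshold $(\alpha/4)m$ in the averaging step is precisely what produces the factor $\alpha^{2}$ (rather than $\alpha$) and the constant $16$ in the stated bound; the constant on $v^{3}$ in the denominator then reflects the $m$~losses coming respectively from the $|\mathcal{P}|\le nD$ estimate, the averaging, and the overcounting of each $k$-AP by up to $m^{2}$ members of $\mathcal{P}$.
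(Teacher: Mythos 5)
The paper does not prove this statement at all: it is imported verbatim as a citation of Shkredov (Theorem 18 of \cite{shkredov2006szemeredi}), which is itself a quantitative form of Varnavides' sharpening of Szemer\'edi's theorem. Your reconstruction is exactly the standard Varnavides averaging argument that underlies the cited result, and it is correct: the incidence count for the interior $I$, the bound $|\mathcal{P}|\le nD$, the Markov-type split isolating the dense $m$-APs, the application of Szemer\'edi at density $\alpha/4$ inside each dense $P$ via the affine identification $P\cong[m]$, and the overcount bound $\sum_{q}(m-(k-1)q)\le m^{2}/(k-1)$ are all sound. The only point worth tightening is the constant chase: since $D=\lfloor \alpha n/(4m)\rfloor$ is strictly less than $\alpha n/(4m)$, the chain $X\ge \alpha nD/4$ gives only $(1-o(1))\,\alpha^{2}n^{2}/(16m)$, so to land exactly on the stated $\alpha^{2}n^{2}/(16\,v^{3})$ you should either exploit the factor $k-1\ge 2$ you already have in the overcount bound (which you discard when relaxing $m^{2}/(k-1)$ to $m^{2}$), or take $v(k,\alpha)$ to be a fixed multiple of $sz(k,\alpha/4)$ as you indicate; either repair is routine and you flag it explicitly. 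So there is no gap, and your write-up in effect supplies a proof that the paper only cites.
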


In addition, we have the following proposition.

\begin{proposition}\label{[n]}
For all integers $n\ge k\ge 2$ with $n\equiv k' \pmod {k-1}$, we have
\[
\Gamma_k([n])=\frac{n^2}{2(k-1)}-\frac{n}{2}+\frac{k'(k-1-k')}{2(k-1)}.
\]
\end{proposition}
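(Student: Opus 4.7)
The plan is a direct, elementary counting argument: classify the $k$-APs in $[n]$ by their common difference $d$ and sum the counts. For a fixed $d\ge 1$, a $k$-AP $a,a+d,\dots,a+(k-1)d$ sits inside $[n]$ iff $1\le a\le n-(k-1)d$, so there are exactly $n-(k-1)d$ such $k$-APs (and none once $(k-1)d\ge n$). Hence, letting $m=\lfloor (n-1)/(k-1)\rfloor$,
\[
\Gamma_k([n]) \;=\; \sum_{d=1}^{m}\bigl(n-(k-1)d\bigr) \;=\; mn-(k-1)\tfrac{m(m+1)}{2}.
\]

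The next step is to substitute the residue information. Writing $n=q(k-1)+k'$ with $k'\in\{1,2,\dots,k-1\}$ (so $k'$ is the residue class of $n$ modulo $k-1$ with the standard convention that $k-1$ replaces $0$), one checks $m=q=(n-k')/(k-1)$, since $n-1=q(k-1)+(k'-1)$ with $0\le k'-1\le k-2$. Plugging $(k-1)q=n-k'$ into the displayed formula gives
\[
\Gamma_k([n]) \;=\; q\,n-\tfrac{q\bigl((n-k')+(k-1)\bigr)}{2} \;=\; \frac{q\bigl(n+k'-k+1\bigr)}{2} \;=\; \frac{(n-k')(n+k'-k+1)}{2(k-1)}.
\]

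Finally, expanding the numerator yields
\[
(n-k')(n+k'-k+1) \;=\; n^{2}-(k-1)n+k'(k-1-k'),
\]
so dividing by $2(k-1)$ produces exactly $\tfrac{n^{2}}{2(k-1)}-\tfrac{n}{2}+\tfrac{k'(k-1-k')}{2(k-1)}$, as claimed.

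There is essentially no obstacle here: the only thing to watch is the convention for $k'$ (so that the floor in $m=\lfloor (n-1)/(k-1)\rfloor$ collapses cleanly to $(n-k')/(k-1)$) and the algebraic simplification of $(n-k')(n+k'-k+1)$. Both are routine, so the proof amounts to a careful bookkeeping of these two steps.
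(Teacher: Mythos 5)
Your proof is correct. It takes the dual decomposition to the paper's: you classify the $k$-APs by their common difference $d$, getting $n-(k-1)d$ progressions for each $d\le \lfloor (n-1)/(k-1)\rfloor$ and summing, whereas the paper classifies them by their first term $a$, partitioning $[n]$ into the blocks $V_0=[1,k']$ and $V_i=[k'+(i-1)(k-1)+1,\,k'+i(k-1)]$ and using that every $a\in V_i$ starts exactly $q-i$ progressions. Both are routine double counts leading to the same closed form; neither buys anything over the other beyond taste. One small point worth noting: you take $k'\in\{1,\dots,k-1\}$ while the paper's proof implicitly uses $k'\in\{0,\dots,k-2\}$ (it sets $V_0=\emptyset$ when $k'=0$); this is harmless because $k'(k-1-k')$ vanishes at both $k'=0$ and $k'=k-1$, so the stated formula is the same under either convention, and your verification that $m=\lfloor(n-1)/(k-1)\rfloor=q$ holds for $k'=k-1$ as well.
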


\begin{proof}
Set $q=\frac{n-k'}{k-1}$. Let us consider a partition of $[n]$ into $V_0,V_1, \dots, V_q$ with $V_0=[1,k']$ (if $k'=0$ then $V_0=\emptyset$) and $V_i=[k'+(i-1)(k-1)+1,k'+i(k-1)]$ for $i\in[q]$. Then for each integer $a\in V_i$, $\Gamma_k(a^+,[n])=q-i$ for $0\le i\le q$. Therefore, we have
\[
\Gamma_k([n])= \sum_{a\in [n]}\Gamma_k(a^+,[n])=\sum_{i=0}^{q}|V_i|(q-i)
             =\frac{n^2}{2(k-1)}-\frac{n}{2}+\frac{k'(k-1-k')}{2(k-1)}.
\]
\end{proof}

\subsection{The hypergraph container mathod}
We will use the method of hypergraph containers in the proof of Theorem~\ref{up-bound-dense}.
This powerful method was developed independently by Balogh-Morris-Samotij~\cite{B-container} and Saxton-Thomason~\cite{S-container}.

A \textit{$k$-uniform hypergraph} $\mathcal{H}=(V, E)$ consists of a vertex set $V$ and an edge set $E$, in which every edge is a set of $k$ vertices in $V$. An \textit{independent} set in $\mathcal{H}$ is a set of vertices inducing no edge in $E$.
Many classical theorems in combinatorics can be phrased as statements about
independent sets in given hypergraphs. For example, Szemer\'{e}di's Theorem can be expressed as for $V(\mathcal{H})=[n]$ and $E(\mathcal{H})$ consisting of all $k$-APs in $[n]$, the size of the maximum independent set of $\mathcal{H}$ is $o(n)$.

Let $\mathcal{H}=(V, E)$ be a $k$-uniform hypergraph with average degree $d:=k|E|/|V|$. For every
$X\subseteq V(\mathcal{H})$, let $\mathcal{H}[X]$ be the subhypergraph of $\mathcal{H}$ induced by $X$, and let $\deg(X)=|\{e\in E(\mathcal{H}) :  X\subseteq e\}|$ denote the \textit{co-degree} of $X$.
For every $ j\in [k]$, denote by $\Delta_j(\mathcal{H})$ the $j$th maximum co-degree of $\mathcal{H}$, that is, $\Delta_j(\mathcal{H})=\max\{\deg(X): \ X\subseteq V(\mathcal{H}),\ |X|=j\}$. When the underlying hypergraph is clear, we simply write it as $\Delta_j$.
For any real number $0<\tau<1$, the \textit{co-degree function} $\Delta(\mathcal{H},\tau)$ is defined as
\[
\Delta(\mathcal{H},\tau)=2^{\binom{k}{2}-1}\sum_{j=2}^k2^{-\binom{j-1}{2}}\frac {\Delta_j}{d\tau^{j-1}}.
\]

We will use the following version of the hypergraph container theorem.

\begin{theorem}{\rm (\cite[Corollary 3.6]{S-container})}
\label{baloghcontainer}
Let $\mathcal{H}$ be a $k$-uniform hypergraph on $n$ vertices. Let $0<\varepsilon, \tau< 1/2$.
Suppose that $\tau<1/(200k!^2k)$ and $\Delta(\mathcal{H},\tau)\leq \varepsilon/(12k!)$. Then there exists $c=c(k)\leq 1000k!^3k$ and a collection $\mathcal{C}$ of vertex subsets such that
\begin{compactenum}[\rm (i)]
  \item every independent set in $\mathcal{H}$ is a subset of some $I\in \mathcal{C}$;\smallskip
  \item for every $I\in \mathcal{C}$, $|E(\mathcal{H}[I])|\leq \varepsilon |E(\mathcal{H})|$;\smallskip
  \item $\log|\mathcal{C}|\leq cn\tau \log(1/\varepsilon)\log(1/\tau)$.
\end{compactenum}
\end{theorem}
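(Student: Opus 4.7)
The plan is to follow the approach of Saxton--Thomason: design an algorithm that, given any independent set $I$ of $\mathcal{H}$, outputs a small \emph{fingerprint} $S(I)\subseteq I$ together with a \emph{container} $A(S)\supseteq I$ determined entirely by the fingerprint. Taking $\mathcal{C}=\{A(S):S\subseteq V(\mathcal{H}),\ |S|\le\tau n\}$ then yields property (i) by construction, and the bound in (iii) reduces to counting possible fingerprints, at most $\sum_{i\le\tau n}\binom{n}{i}$, which evaluates to the claimed $\log|\mathcal{C}|\le cn\tau\log(1/\varepsilon)\log(1/\tau)$ once the correct size estimate on $|S|$ is proved.

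I would then describe the algorithm as follows. Fix an independent set $I$ and a suitable total order on $V(\mathcal{H})$ (either an adversarial order determined by $I$, or a random one followed by derandomisation). Maintain during the run a growing fingerprint $S\subseteq I$, initially empty, and a growing set $T\subseteq V\setminus I$ of vertices certified to lie outside $I$. When processing the next vertex $v$, compute, for each $j\in\{2,\ldots,k\}$, the $j$-wise co-degree statistics of $v$ with respect to $S$ inside $V\setminus(S\cup T)$. If any of these exceeds a level-$j$ threshold tuned to $\Delta_j$, add $v$ to $S$ (which is legitimate because the order is arranged so that such a $v$ necessarily lies in $I$); otherwise add $v$ to $T$. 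Define $A(S):=V\setminus T$. Property (i) is immediate from the invariant $S\subseteq I\subseteq A(S)$. Property (ii) is secured because every time the algorithm takes the "add to $S$" branch a prescribed number of edges is killed inside $\mathcal{H}[A(S)]$, and the threshold $\varepsilon/(12k!)$ on $\Delta(\mathcal{H},\tau)$ forces the residual edge count to drop below $\varepsilon|E(\mathcal{H})|$ by termination.

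The central calculation is property (iii), the bound on $|S|$. Averaging over random orderings of $V(\mathcal{H})$ shows that the expected size of $S$ is at most $n$ times the weighted sum $\sum_{j=2}^{k}2^{-\binom{j-1}{2}}\Delta_j/(d\tau^{j-1})$ that defines $\Delta(\mathcal{H},\tau)$, so the hypothesis $\Delta(\mathcal{H},\tau)\le\varepsilon/(12k!)$ immediately yields $|S|\le\tau n$. The main obstacle is the trade-off between (ii) and (iii): loosening the level-$j$ thresholds shrinks $|S|$ but leaves too many edges inside $A(S)$, while tightening them kills enough edges but blows up $|S|$. Designing the layered thresholds indexed by $j\in\{2,\ldots,k\}$ so that each co-degree level contributes independently to both properties is precisely what forces the definition of $\Delta(\mathcal{H},\tau)$ to be this specific weighted sum; the geometric factors $2^{-\binom{j-1}{2}}$ are exactly what prevents destructive compounding across levels. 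Once this balance is set, the explicit constants $1/(200k!^2k)$, $1/(12k!)$, and $1000k!^3k$ fall out of careful but routine bookkeeping, and the standard binomial estimate on $\sum_{i\le\tau n}\binom{n}{i}$ completes the bound on $\log|\mathcal{C}|$.
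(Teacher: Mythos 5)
The paper does not prove this statement at all: it is quoted verbatim as Corollary 3.6 of Saxton and Thomason's container paper and used as a black box, so there is no internal proof to compare yours against. What you have written is an outline of the Saxton--Thomason/Balogh--Morris--Samotij container argument, and at the level of architecture it is the right outline: a deterministic procedure extracting a small fingerprint $S\subseteq I$, a container $A(S)\supseteq I$ depending only on $S$, and a count of fingerprints. But as a proof it has two genuine gaps. First, the entire technical content of the theorem is the size bound on $S$ together with the guarantee that each container loses a fixed fraction of the edges; you assert both (``averaging over random orderings shows\dots'', ``a prescribed number of edges is killed\dots'') without defining the thresholds, the order, or the degree measures, and this is exactly the part that occupies most of the source paper. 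Moreover, the sentence claiming that ``the order is arranged so that such a $v$ necessarily lies in $I$'' is not something one can arrange: in the actual algorithm the fingerprint is built by selecting high-degree vertices \emph{of $I$} (the algorithm is permitted to inspect $I$ while building $S$; what matters is only that $A$ is a function of $S$ alone), and no ordering forces high-codegree vertices to lie in $I$.

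Second, and more concretely, your accounting for property (iii) cannot be right as stated. A single pass of the algorithm produces containers in which only a constant fraction of the edges (depending on $k$, not on $\varepsilon$) has been destroyed; the factor $\log(1/\varepsilon)$ in the bound $\log|\mathcal{C}|\leq cn\tau\log(1/\varepsilon)\log(1/\tau)$ arises precisely because the container step must be \emph{iterated} $O(\log(1/\varepsilon))$ times, reapplying the algorithm inside each container until the edge count drops below $\varepsilon|E(\mathcal{H})|$. The fingerprint of a final container is a union of $O(\log(1/\varepsilon))$ round-fingerprints, each contributing about $\tau n\log(1/\tau)$ to the log of the count, which is where the product $\log(1/\varepsilon)\log(1/\tau)$ comes from. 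Your claim that the single-round count $\sum_{i\le\tau n}\binom{n}{i}$ ``evaluates to'' $cn\tau\log(1/\varepsilon)\log(1/\tau)$ conflates these: that sum has logarithm roughly $\tau n\log(1/\tau)$ and contains no $\varepsilon$ at all. None of this affects the paper, which correctly treats the theorem as imported; but as a standalone argument your proposal is a road map rather than a proof.
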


A key concept in applying the hypergraph container method to such coloring problems is the notion of \textit{template}, which was first formally introduced in~\cite{template2019}, although the concept had already appeared in~\cite{other-coloer2006} under the name of ``2-colored multigraphs".

\begin{definition}(Template, palette, subtemplate, rainbow $k$-AP template)\label{def-template}
\end{definition}
\begin{itemize}
  \item An \textit{$r$-template} of order $n$ is a function $P: [n] \to 2^{[r]}$, associating with each element $x \in [n]$ a list of colors $P(x) \subseteq [r]$. We refer to this set $P(x)$ as the \textit{palette} available at $x$.
  \item Let $P_1$, $P_2$ be two $r$-templates of order $n$. We say that $P_1$ is a \textit{subtemplate} of $P_2$ (i.e. $P_1 \subseteq P_2$) if $P_1(x) \subseteq P_2(x)$  for every $x \in [n]$.
  \item For an $r$-template $P$ of order $n$, we say that $P$ is a \textit{rainbow $k$-AP $r$-template} if there exists a $k$-AP $K=\{a_1, \dots, a_k\}$ such that $|P(a_i)|=1$ for $i\in [k]$, $P(x)=\emptyset$ for $x\in [n]\setminus K$, and $P(a_1), \dots, P(a_k)$ are pairwise distinct.
\end{itemize}

Note that for any $S \subseteq [n]$, an $r$-coloring of $S$ can be considered as an $r$-template $P$ of order $n$ with $|P(x)|=1$ for $x\in S$ and $P(x)=\emptyset$ for $x\in [n]\setminus S$. For an $r$-template $P$ of order $n$, write $R_k(P)$ for the number of subtemplate of $P$ that are rainbow $k$-AP templates. We say that $P$ is a \textit{rainbow $k$-AP-free} $r$-template if $R_k(P)=0$.
Given positive integers $n\ge r\ge k \ge 3$, consider the \textit{$k$-AP-hypergraph} $\mathcal{H}_k$
encoding the vertex set $[n]\times [r]$ and the edge set of $\mathcal{H}_k$ consists of all $k$-tuples $\{(a_1, c_1), \dots, (a_k, c_k)\}$ such that $\{a_1, \dots, a_k\}$ forms a
$k$-AP in $[n]$ and $c_1,\dots, c_k$ are $k$ pairwise distinct
colors in $[r]$.
Using the $k$-AP-hypergraph $\mathcal{H}_k$, we obtain the following adaption of Theorem~\ref{baloghcontainer} to rainbow $k$-AP-free $r$-template.

\begin{corollary}\label{r-k-ap-container}
For integers $r\ge k\ge 3$ and sufficiently large $n$, there exists a constant $c=c(r,k)$ and a collection $\mathcal{C}$ of $r$-templates of order $n$ such that
\begin{compactenum}[\rm (i)]
  \item every rainbow $k$-AP-free $r$-template of order $n$ is a subtemplate of some $P\in \mathcal{C}$;\smallskip
  \item for every $P\in \mathcal{C}$, $R_k(P)<k^{-1}n^{2-1/k}$;\smallskip
  \item $\log|\mathcal{C}|\leq cn^{\frac{k-1}{k}} \log^2 n$.
\end{compactenum}
\end{corollary}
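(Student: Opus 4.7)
My plan is to deduce Corollary~\ref{r-k-ap-container} by invoking Theorem~\ref{baloghcontainer} on the $k$-AP-hypergraph $\mathcal{H}_k$ defined just before the statement. The bridge between container outputs and templates is the identification of an $r$-template $P$ of order $n$ with the subset $I_P := \{(x,c) \in [n]\times[r] : c \in P(x)\}$ of $V(\mathcal{H}_k)$, reversing via $I \mapsto P_I(x) := \{c : (x,c) \in I\}$. Under this identification, each hyperedge of $\mathcal{H}_k$ contained in $I_P$ corresponds to exactly one rainbow $k$-AP subtemplate of $P$, so $R_k(P) = |E(\mathcal{H}_k[I_P])|$; in particular, $P$ is rainbow $k$-AP-free iff $I_P$ is an independent set of $\mathcal{H}_k$.

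First I would compute the relevant parameters of $\mathcal{H}_k$. The hypergraph has $rn$ vertices and, using Proposition~\ref{[n]}, exactly $(r)_k\,\Gamma_k([n]) = \Theta_{r,k}(n^2)$ edges, where $(r)_k = r(r-1)\cdots(r-k+1)$. Hence its average degree is $d = k|E(\mathcal{H}_k)|/(rn) = \Theta_{r,k}(n)$. For the higher co-degrees: given $j \ge 2$ vertices whose $[n]$-coordinates are distinct, any two such coordinates already determine a candidate $k$-AP through them once the two positions within the AP are fixed, giving at most $\binom{k}{2}$ containing $k$-APs; the remaining $k-j$ coordinates can then be coloured with distinct colours outside the prescribed ones in at most $(r-j)_{k-j}$ ways. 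Thus $\Delta_j(\mathcal{H}_k) \le \binom{k}{2}(r-j)_{k-j} = O_{r,k}(1)$ for every $j \in \{2,\dots,k\}$.

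Next I would apply Theorem~\ref{baloghcontainer} with the choices $\tau = B n^{-1/k}$ and $\varepsilon = \alpha n^{-1/k}$, where $\alpha = \alpha(r,k)$ is chosen small enough that $\alpha \cdot |E(\mathcal{H}_k)| < k^{-1} n^{2-1/k}$ for all large $n$ (feasible since $|E(\mathcal{H}_k)|/n^2$ tends to a constant), and $B = B(r,k)$ is then chosen large enough to control the co-degree function. Substituting the bounds above,
\[
\Delta(\mathcal{H}_k, \tau) \le 2^{k^2}\sum_{j=2}^{k}\frac{\Delta_j}{d\,\tau^{j-1}} \le \frac{C(r,k)}{B^{k-1}\,n^{1/k}},
\]
where the $j = k$ summand dominates. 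Taking $B$ large makes the right-hand side at most $\varepsilon/(12k!)$; the remaining hypothesis $\tau < 1/(200\,k!^2\,k)$ is automatic for large $n$. Theorem~\ref{baloghcontainer} then yields a collection $\mathcal{C}'$ of vertex subsets of $\mathcal{H}_k$ such that every independent set is contained in some $I\in\mathcal{C}'$, each $|E(\mathcal{H}_k[I])| \le \varepsilon|E(\mathcal{H}_k)|$, and
\[
\log|\mathcal{C}'| \le c\cdot rn \cdot \tau \cdot \log(1/\varepsilon)\log(1/\tau) = O_{r,k}\bigl(n^{(k-1)/k}\log^2 n\bigr).
\]

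Finally I would set $\mathcal{C} = \{P_I : I \in \mathcal{C}'\}$ and verify the three conclusions directly: property (i) holds because any rainbow $k$-AP-free template $P$ gives an independent set $I_P$ with $I_P \subseteq I$ for some $I \in \mathcal{C}'$, whence $P \subseteq P_I$; property (ii) holds because $R_k(P_I) = |E(\mathcal{H}_k[I])| \le \varepsilon |E(\mathcal{H}_k)| < k^{-1} n^{2-1/k}$; and property (iii) follows from $|\mathcal{C}| \le |\mathcal{C}'|$. The only non-routine step is the verification of the co-degree bounds together with tuning $\tau$ and $\varepsilon$ so that both container-theorem hypotheses and the sharp exponent $n^{2-1/k}$ in (ii) hold simultaneously; the exponent $1/k$ is pinned down by the $j=k$ term of $\Delta(\mathcal{H}_k,\tau)$, since $\Delta_k = O_{r,k}(1)$ while $d = \Theta_{r,k}(n)$.
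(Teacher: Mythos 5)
Your proposal is correct and follows essentially the same route as the paper: it applies Theorem~\ref{baloghcontainer} to the same $k$-AP-hypergraph $\mathcal{H}_k$, with the same identification of vertex subsets with $r$-templates (so $R_k(P)=|E(\mathcal{H}_k[I_P])|$), the same order of parameters $\varepsilon,\tau=\Theta_{r,k}(n^{-1/k})$, and the same co-degree estimates (the paper just fixes explicit constants, e.g.\ $\varepsilon=\frac{(r-k)!}{r!}n^{-1/k}$ and $\tau=(2^k\cdot 12k!\cdot r)^{1/(k-1)}n^{-1/k}$, rather than your generic $\alpha$ and $B$). No gaps; the verification of $\Delta(\mathcal{H}_k,\tau)\le\varepsilon/(12k!)$ via the dominant $j=k$ term matches the paper's computation.
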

\begin{proof}
Consider the $k$-AP-hypergraph $\mathcal{H}_k=(V,E)$. Clearly, $|E|=\frac{r!}{(r-k)!}\Gamma_k([n])$.
By Proposition~\ref{[n]}, we have $|E|>\frac{r!n^2}{2k(r-k)!}$.
By the definitions of $d$ and $\Delta_j$, it is easy to check that
\begin{equation}
d=\frac{k|E|}{|V|}> \frac{(r-1)!}{2(r-k)!}n, \ \ \
\Delta_k=1, \ \ \
\Delta_j\le \Delta_2\le \binom{k}{2}\frac{(r-2)!}{(r-k)!}< k^2r^{k-2}
\ \ \text{for} \ 2\le j \le k-1.
\end{equation}
Now we apply Theorem~\ref{baloghcontainer} on $\mathcal{H}_k$. Let $\varepsilon=\frac{(r-k)!}{r!}n^{-\frac{1}{k}}$ and
$\tau=(2^k\cdot12k!\cdot r)^{\frac{1}{k-1}}n^{-\frac{1}{k}}$.
Let $\alpha_j=2^{-\binom{j-1}{2}} \tau^{1-j}$ for $2\le j\le k$.
Then we have
\begin{equation}
\frac{\alpha_j}{\alpha_{j+1}}=2^{j-1}\tau <2^k\tau<1 \ \ \text{for} \ 2\le j \le k-1 \ \
\text{and} \ \
\frac{\alpha_{k-1}}{\alpha_{k}} k^3r^{k-2}=k^3r^{k-2}2^{k-2}\tau<1.
\end{equation}
Next, let us bound the function $\Delta(\mathcal{H}_k,\tau)$ as follows
\begin{align*}
\Delta(\mathcal{H}_k,\tau)  &=  2^{\binom{k}{2}-1}d^{-1}\sum_{j=2}^k\alpha_j{\Delta_j}
\overset{(1)}{\le}  2^{\binom{k}{2}-1}d^{-1} \big(k^2r^{k-2}\sum_{j=2}^{k-1}\alpha_j +\alpha_k\big)\\
& \overset{(2)}{\le}  2^{\binom{k}{2}-1}d^{-1} \big((k-2)k^2r^{k-2}\alpha_{k-1}+\alpha_k\big)
\overset{(2)}{\le} 2^{\binom{k}{2}}d^{-1}\alpha_{k}
\overset{(1)}{\le} \frac{\varepsilon}{12k!}.
\end{align*}
Hence, there exists a collection $\mathcal{C}$ of vertex subsets satisfying properties (i)-(iii) of Theorem~\ref{baloghcontainer}. Observe that every vertex subset of $\mathcal{H}_k$ can be regarded as an $r$-template of order $n$, and every independent set in $\mathcal{H}_k$ can be regarded as a rainbow $k$-AP-free $r$-template of order $n$. Therefore, $\mathcal{C}$ is a desired collection of $r$-templates of order $n$. Moreover, by Proposition~\ref{[n]} and the property (ii) of Theorem~\ref{baloghcontainer}, for every $P\in \mathcal{C}$, $R_k(P)\leq \varepsilon |E|= n^{-1/k}\Gamma_k([n])<k^{-1}n^{2-1/k}$.
\end{proof}
For convenience, we define good $r$-template with respect to subset $S\subseteq [n]$.
\begin{definition}[Good $r$-template]
For every subset $S\subseteq [n]$, an $r$-template $P$ of order $n$ is a \textit{good $r$-template} for $S$ if it satisfies the following properties:\smallskip
\begin{compactenum}[(i)]
  \item For every $x \in S$, $|P(x)|\geq 1$;\smallskip
  \item $R_k(P)<k^{-1}n^{2-1/k}$.
\end{compactenum}
\end{definition}

\section{Proof of Theorem~\ref{l-bound}}\label{sec-l-bound}
Theorem~\ref{l-bound} can be obtained directly, using the following Proposition~\ref{f(r,S)}. Given a positive integer $r$ and a set $S\subseteq [n]$ with $|S|\ge r$, let $f(r,S)$ denote the number of exact $r$-colorings of $S$.

\begin{proposition}\label{f(r,S)}
For a positive integer $r$  and a set $S$ with $|S|\ge r$,
\[
f(r,S)=\sum_{i=0}^{r-1}\binom{r}{i}(r-i)^{|S|}(-1)^{i}.
\]
\end{proposition}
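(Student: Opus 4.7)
The plan is a direct inclusion-exclusion on the set of colors that are \emph{missed} by a coloring. An exact $r$-coloring of $S$ is precisely a surjection $\chi : S \to [r]$, so I want to count surjections.

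First I would define, for each $j\in[r]$, the event $A_j = \{\chi : S \to [r] \mid j \notin \chi(S)\}$. For any subset $T \subseteq [r]$ with $|T|=i$, a coloring lies in $\bigcap_{j\in T} A_j$ iff it uses only colors in $[r]\setminus T$, and there are exactly $(r-i)^{|S|}$ such colorings. Then I would apply inclusion-exclusion to compute
\[
f(r,S) \;=\; r^{|S|} - \Bigl|\bigcup_{j=1}^{r} A_j\Bigr| \;=\; \sum_{i=0}^{r}(-1)^i\binom{r}{i}(r-i)^{|S|}.
\]
Finally, I would note that the $i=r$ term equals $(-1)^r\binom{r}{r}\cdot 0^{|S|}=0$ since $|S|\ge r\ge 1$, so the sum truncates to $i=0,\dots,r-1$, matching the stated expression.

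There is no real obstacle here; the only thing to be careful about is the boundary term $i=r$ (which requires $|S|\ge 1$, automatic from $|S|\ge r$). The proof is essentially a one-line inclusion-exclusion argument, identical to the classical count of surjections between finite sets.
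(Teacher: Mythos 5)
Your proof is correct, but it takes a different route from the paper. You count surjections directly by inclusion--exclusion over the set of missed colors, obtaining $f(r,S)=\sum_{i=0}^{r}(-1)^i\binom{r}{i}(r-i)^{|S|}$ and then discarding the vanishing $i=r$ term (which indeed only needs $|S|\ge 1$). The paper instead proceeds by induction on $r$: it starts from the recursion $f(r+1,S)=(r+1)^{|S|}-\sum_{j=1}^{r}\binom{r+1}{j}f(j,S)$, which expresses that every coloring uses some exact number of colors, and then extracts the coefficient of each $(r-i)^{|S|}$ term using a binomial-coefficient identity (via the Binomial Theorem). Your argument is the classical one-step surjection count and is shorter and arguably more transparent; the paper's inductive recursion has the side benefit of matching the way $f(j,S)$ for smaller $j$ is reused later (e.g.\ in the identity $f^{<k}(r,S)=\sum_{j=1}^{k-1}\binom{r}{j}f(j,S)$ in the proof of Theorem~\ref{l-bound}), but both proofs are complete and yield the same formula.
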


\begin{proof}
We use induction on $r$. For $r=1$ this is clearly true, so we assume that the equality holds for the number of summands up to $r$, and
prove it for $r+1$.
By the definition of $f(r,S)$, we have
\begin{equation}\label{exact-coloring}
f(r+1,S)=(r+1)^{|S|}-\sum_{j=1}^{r}\binom{r+1}{j}f(j,S).
\end{equation}
Fix an integer $0\le i\le r-1$. By the induction hypothesis, the coefficient of the $(r-i)^{|S|}$ term of the equation~(\ref{exact-coloring}) is
\[
-\sum_{j=0}^{i}\binom{r+1}{r-j}\binom{r-j}{i-j}(-1)^{i-j}
=\frac{(r+1)!}{(r-i)!(i+1)!}\sum_{j=0}^{i}\frac{(i+1)!}{(i-j)!(j+1)!}(-1)^{i+1-j}.
\]
By the Binomial Theorem, the coefficient of the $(r-i)^{|S|}$ term is reduced to
\[
\frac{(r+1)!}{(r-i)!\cdot(i+1)!}\cdot(-1)^{i+1}=\binom{r+1}{i+1}\cdot(-1)^{i+1}.
\]
Therefore,
\[
f(r+1,S)=(r+1)^{|S|}+\sum_{i=0}^{r-1}\binom{r+1}{i+1}(-1)^{i+1}(r-i)^{|S|}=\sum_{i=0}^{r}\binom{r+1}{i}(r+1-i)^{|S|}(-1)^{i}.
\]
\end{proof}

\begin{proof}[\bf Proof of Theorem~\ref{l-bound}]
For all $r \ge k\ge 2$ and $S\subseteq [n]$ with $|S|\ge k$,
\begin{align*}
	g_{r,k}(S) \ge f^{<k}(r,S) &=  \sum_{j=1}^{k-1}  \binom{r}{j}f(j,S)= \sum_{j=1}^{k-1} \bigg( \binom{r}{j} \sum_{i=0}^{j-1}\binom{j}{i}(j-i)^{|S|}(-1)^{i} \bigg)\\	
	&\xlongequal[]{t:=j-i} \sum_{j=1}^{k-1} \bigg( \binom{r}{j} \sum_{j-t=0}^{j-t=j-1}\binom{j}{j-t}t^{|S|}(-1)^{j-t}\bigg)= \sum_{j=1}^{k-1} \bigg( \binom{r}{j} \sum_{t=1}^{j}\binom{j}{j-t}t^{|S|}(-1)^{j-t}\bigg).   
\end{align*}
Note that the constraints are $1\le t \le j \le k-1$ in the last formula. 
Therefore, we can change the order of summation and obtain 
\begin{align*}
	g_{r,k}(S) \ge \sum_{t=1}^{k-1}\bigg(t^{|S|}\sum_{j=t}^{k-1}\binom{r}{j}\binom{j}{j-t}(-1)^{j-t}\bigg).   
\end{align*}

\end{proof}

\section{Proof of Theorem~\ref{up-bound-dense}}\label{pf-u-bound}
In this section, we shall prove Theorem~\ref{up-bound-dense} using Corollary~\ref{r-k-ap-container}. Throughout this section, we restrict ourselves to sufficiently large $n$ and subset $S\subseteq[n]$ with $|S|=(1-\xi)n$ and $0\leq \xi \leq r^{-3}/2$.

Recalling the definition of $r$-template in Definition~\ref{def-template}, for a set $S\subseteq [n]$ and a collection $\mathcal{P}$ of $r$-templates of order $n$, denote by $G(\mathcal{P},S)$ the set of rainbow $k$-AP-free $r$-colorings of $S$, which is a subtemplate of some $P\in \mathcal{P}$. Let $g(\mathcal{P},S)=|G(\mathcal{P},S)|$.
If $\mathcal{P}$ consists of a single $r$-template $P$, then we simply write $G(P,S)$ and $g(P,S)$.

Let $\mathcal{C}$ be the collection of $r$-templates of order $n$ given by Corollary~\ref{r-k-ap-container}. By property (i) of Corollary~\ref{r-k-ap-container},  since every rainbow $k$-AP-free $r$-coloring of $S$ can be regarded as a subtemplate of some $P \in  \mathcal{C}$, we have $g_{r,k}(S)=g(\mathcal{C},S)$.
So it remains to estimate the upper bound of $g(\mathcal{C},S)$.
Set $\delta=\frac{1-\log(k-2)\cdot \log^{-1}(k-1)}{6k^3\log n}$.
Given a subset $S\subseteq[n]$, we divide $\mathcal{C}$ into two classes
\begin{equation}\label{def:coll}
\mathcal{C}_1=\{P\in \mathcal{C} : g(P,S)\leq (k-1)^{(1-\delta)n}\},
\quad
\mathcal{C}_2=\{P\in \mathcal{C} : g(P,S)> (k-1)^{(1-\delta)n}\}.
\end{equation}
The crucial part of the proof is to estimate the upper bound of $g(\mathcal{C}_2,S)$, which relies on the following three lemmas.

Note that every $P\in\mathcal{C}_2$ is a good $r$-template for $S$ by property (ii) of Corollary~\ref{r-k-ap-container}. For a template $P\in\mathcal{C}_2$ and a subset $S$, let
\[
X_1=\{x\in S: |P(x)|\leq k-2\},\quad \ X_2=\{x\in S : |P(x)|=k-1\},\quad \ X_3=\{x\in S:  |P(x)|\geq k\},
\] and $x_i=|X_i|$ for $i\in[3]$. Then by the definitions of $G(P,S)$ and $P\in \mathcal{C}_2$, we have 
\begin{equation}\label{eq:x3lb}
	(k-1)^{(1-\delta)n}<g(P,S)\le (k-2)^{x_1}(k-1)^{x_2}r^{x_3}.
\end{equation}

\begin{lemma} \label{x_3}
Let $r\ge k\ge 3$ and $\delta=\frac{1-\log(k-2)\cdot \log^{-1}(k-1)}{6k^3\log n}$. For every subset $S\subseteq[n]$ with $|S|=(1-\xi)n$ and $0\le \xi\le r^{-3}/2$, if there exists $P\in \mathcal{C}_2$, then $x_3 < 2kn^{\frac{k-1}{k}}$ and $\xi< \delta+2k(\log{r}-1)n^{-1/k}$.
\end{lemma}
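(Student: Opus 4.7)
The plan is to prove the two bounds in sequence, with the bound on $\xi$ following from the bound on $x_3$ by clean algebraic manipulation; the main effort thus lies in establishing $x_3 < 2kn^{(k-1)/k}$. I will argue the $x_3$ bound by contradiction: assuming $x_3 \ge 2kn^{(k-1)/k}$, I will produce more than $k^{-1}n^{2-1/k}$ rainbow $k$-AP subtemplates of $P$, contradicting the good-template property (ii) of Corollary~\ref{r-k-ap-container} (to which every $P \in \mathcal{C}_2 \subseteq \mathcal{C}$ is subject).

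To manufacture many rainbow subtemplates, I first apply pigeonhole to assume without loss of generality that $|X_3 \cap [1, n/2]| \ge x_3/2 \ge kn^{(k-1)/k}$ (the opposite half is symmetric via $a \leftrightarrow n+1-a$, counting $k$-APs with $a$ as the last term). For each such $a$, a direct calculation in the spirit of Proposition~\ref{[n]} gives $\Gamma_k(a^+, [n]) = \lfloor (n-a)/(k-1)\rfloor \ge n/(2(k-1))$. A standard bound shows every $y \in [n]$ lies in at most $2kn/(k-1)$ $k$-APs of $[n]$, so at most $2k(k-1)^{-1} \xi n^2$ $k$-APs meet $[n]\setminus S$, which is a negligible term since $\xi \le r^{-3}/2$. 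Thus at least $\Omega(x_3 n) = \Omega(n^{2-1/k})$ pairs $(a, K)$ survive with $a \in X_3 \cap [1, n/2]$ and $K \subseteq S$ a $k$-AP starting at $a$. For each such pair I intend to exhibit at least one rainbow coloring of $K$ in $P$: leveraging $|P(a)| \ge k$, after fixing colors on the other $k-1$ elements one can pick $c_a \in P(a)$ avoiding all of them. Summing over all such pairs will force $R_k(P) \gtrsim n^{2-1/k}$, contradicting $R_k(P) < k^{-1} n^{2-1/k}$.

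For the $\xi$ bound, I take logarithms of $g(P, S) > (k-1)^{(1-\delta)n}$ (the defining property of $\mathcal{C}_2$) and of the trivial upper bound $g(P, S) \le (k-2)^{x_1}(k-1)^{x_2} r^{x_3}$ (using $|P(x)| \le k-2, k-1, r$ on $X_1, X_2, X_3$ respectively); substituting $x_1 = (1-\xi)n - x_2 - x_3$ and then using $x_2 \le (1-\xi)n - x_3$ to eliminate $x_2$, the inequality collapses to
\[
(\xi - \delta)\, n \log(k-1) < x_3 \bigl(\log r - \log(k-1)\bigr).
\]
Plugging in the already-proven $x_3 < 2kn^{(k-1)/k}$ and using $\log(k-1) \ge 1$ together with $\log r - \log(k-1) \le \log r - 1$ (both valid for $k \ge 3$) yields $\xi < \delta + 2k(\log r - 1) n^{-1/k}$, as required.

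\emph{The main obstacle} lies in the second paragraph: ensuring each of the $\Omega(n^{2-1/k})$ surviving $k$-APs genuinely contributes a rainbow subtemplate. While $|P(a)| \ge k$ handles the color choice at the endpoint $a$, the other $k-1$ colors on $K \setminus \{a\}$ must themselves be pairwise distinct, which could fail when those elements have coinciding singleton palettes. I expect to handle this by restricting the count to $k$-APs whose remaining elements lie in $X_2 \cup X_3$ (palettes of size $\ge k-1$), where a greedy Hall-type selection produces $\ge (k-1)!$ rainbow colorings per $k$-AP; the constant-factor loss in the count is absorbed by the generous constant $2k$ in the stated threshold, and the assumption $\xi \le r^{-3}/2$ prevents $X_1$ from being too large to invalidate this reduction.
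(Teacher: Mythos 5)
There is a genuine gap in the first (and main) part of your argument, precisely at the point you flagged as ``the main obstacle''. Your fix --- restricting to $k$-APs whose remaining $k-1$ elements lie in $X_2\cup X_3$ --- is the right idea, but your justification that this restriction is harmless is a non sequitur: the hypothesis $\xi\le r^{-3}/2$ bounds $|[n]\setminus S|$, not $|X_1|$. The set $X_1$ sits inside $S$, and nothing you have used rules out $x_1=\Omega(n)$ (indeed with many singleton, coinciding palettes on $X_1$, most $k$-APs starting in $X_3$ would admit no system of distinct representatives, so no rainbow subtemplate). The paper closes exactly this hole in two steps that your proposal omits: first it shows $x_3=o(n)$ by supersaturation (Theorem~\ref{supersaturation}): if $x_3=\Omega(n)$ then $\Gamma_k(X_3)=\Omega(n^2)$ and every $k$-AP inside $X_3$ yields a rainbow subtemplate, contradicting $R_k(P)<k^{-1}n^{2-1/k}$; then it feeds $x_3=o(n)$ into the defining inequality $g(P,S)>(k-1)^{(1-\delta)n}$, i.e.\ $(k-1)^{(1-\delta)n}\le (k-2)^{x_1}(k-1)^{x_2}r^{x_3}$, to deduce $x_1=o(n)$, hence $|[n]\setminus X_2|=x_1+x_3+\xi n<r^{-3}n$. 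Only then is the count of $k$-APs with one endpoint in $X_3$ and the other $k-1$ terms in $X_2$ (via the iterative algorithm) legitimate. Your plan never bounds $x_1$, and the counting inequality is invoked only later for the $\xi$ bound, so the key reduction is unsupported.

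A secondary, fixable flaw: your subtraction of $k$-APs meeting $[n]\setminus S$ is done globally (``at most $2k(k-1)^{-1}\xi n^2$''), but $\xi$ may be a constant, so $\xi n^2$ is \emph{not} negligible against the target count $\Theta(x_3 n)=\Theta(n^{2-1/k})$ under your contradiction hypothesis. The loss must be accounted per starting point $a$: each element of the forbidden set kills at most $k-1$ common differences, so each $a$ still has $\frac{n}{2(k-1)}-(k-1)\cdot|[n]\setminus(X_2\cup X_3)|>n/k^2$ usable differences --- which is exactly the bookkeeping the paper's Algorithm~\ref{algorithm 1} performs, and which again requires the bound on $x_1$ discussed above. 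By contrast, your second part (deriving $\xi<\delta+2k(\log r-1)n^{-1/k}$ from $g(P,S)>(k-1)^{(1-\delta)n}$, $x_1+x_2+x_3=(1-\xi)n$ and $x_3<2kn^{1-1/k}$, using $\log(k-1)\ge 1$) is correct and coincides with the paper's derivation.
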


\begin{proof}
Given $S\subseteq [n]$, let $P\in \mathcal{C}_2$. We first claim that $x_3=o(n)$. Otherwise, by Theorem~\ref{supersaturation}, we immediately have $\Gamma_k(X_3)=\Omega(n^2)$.
For every $k$-AP $K\subset X_3$, due to $|P(x_i)|\ge k$ for all $x_i\in K$, there exists a rainbow $k$-AP subtemplate $P'\subseteq P$ such that $|P'(x)|=1$ for $x\in K$ and $|P'(y)|=0$ for $y\in [n]\setminus K$.
Therefore, $R_k(P)=\Omega(n^2)$, which contradicts the condition that $P$ is a good $r$-template.

Note that  $\sum_i x_i=|S|=(1-\xi)n$. By Inequality (\ref{eq:x3lb}), we get
\[
x_3>\frac {(\xi- \delta)n \log(k-1)+x_1(\log(k-1)-\log(k-2))}{\log{r}-\log(k-1)},
\]
which also implies that $x_1=o(n)$. Let $\bar {X_2}=[n]\setminus X_2$, then $|\bar {X_2}|=x_1+x_3+\xi n<r^{-3} n$.

Next, we consider the family $\mathcal{K}$ of $k$-APs defined as follows
\[
\mathcal{K}=\{K\subset X_2\cup X_3 : |K\cap X_2|=k-1 \ \text{and} \  |K\cap X_3|=1\}.
\]
Note that every $k$-AP in $\mathcal{K}$ corresponds to at least one rainbow $k$-AP subtemplate of $P$, so we have $|\mathcal{K}|\le R_k(P)$.
In order to estimate the lower bound of $|\mathcal{K}|$, we consider an iterative algorithm (Algorithm~\ref{algorithm 1}). Since this algorithm will also be used in the following proof later, we consider the general input: an initial term $a\in [n]$, a common difference set $D\subset \mathbb{Z}$ and a underlying ground set $B\subseteq [n]$.
\begin{algorithm}[ht]
\caption{ (Iterative Algorithm) }
\label{algorithm 1}
{\bf Input: } $a$, $D$, $B$
\begin{algorithmic}[1]
  \For{$i = 1$ to $k - 1$}
    \State $D' = D\cap \{d: a + id \in B\}$
    \State $D = D'$
  \EndFor
  \State \Return $D^*=D$
\end{algorithmic}
\end{algorithm}

Using Algorithm~\ref{algorithm 1}, we input $a\in X_3$ and $B=X_2$.
If $a<n/2$, then we take $D=\{d\in \mathbb{Z}: 0<d<\frac{n}{2(k-1)}\}$; otherwise, $D=\{d\in \mathbb{Z}: -\frac{n}{2(k-1)}<d<0\}$. In each iteration of Algorithm~\ref{algorithm 1}, it is easy to check that the size of $D$ is reduced by at most $|\bar {X_2}|$. Therefore, for every $a\in X_3$, the final $D^*$ satisfies $|D^*|\ge \frac{n}{2(k-1)}-(k-1)|\bar {X_2}|>\frac{n}{k^2}$, which implies $|\mathcal{K}|
\ge \frac{x_3 n }{2k^2}$. Since $P$ is a good $r$-template of $S$, we have
\[
k^{-1}n^{2-1/k}> R_k(P) > \frac{n x_3}{2k^2},
\]
which indicates $x_3< 2kn^{1-1/k}$.
Moreover, by Inequality (\ref{eq:x3lb})
we have $\xi< \delta+2k(\log{r}-1)n^{-1/k}$ since $\sum_i x_i=|S|=(1-\xi)n$.
\end{proof}

Next, for $P\in \mathcal{C}_2$, we will describe the properties of $P$ through the following lemma.

\begin{lemma} \label{k-1 palette number}
Let $r\ge k\ge 3$, $\delta=\frac{1-\log(k-2)\cdot \log^{-1}(k-1)}{6k^3\log n}$ and $0\leq \xi< \delta+2k(\log{r}-1)n^{-1/k}$. Then for $S\subseteq [n]$ with $|S|=(1-\xi)n$ and $P\in \mathcal{C}_2$, there exist $k-1$ colors $\{c_1,\dots,c_{k-1}\}\subset [r]$ such that the number of integers in $S$ with palette $\{c_1,\dots,c_{k-1}\}$ is more than $(1-1/(3k^3\log n))n$.
\end{lemma}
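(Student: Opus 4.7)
The plan is to argue in three successive steps and then address the main obstacle.

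First, I would sharpen the palette size distribution. Starting from the defining inequality of $\mathcal{C}_2$, namely $(k-2)^{x_1}(k-1)^{x_2}r^{x_3}\ge g(P,S)>(k-1)^{(1-\delta)n}$, together with the constraint $x_1+x_2+x_3=(1-\xi)n$ and the bounds from Lemma~\ref{x_3}, I would take logarithms and solve for $x_1$, obtaining
\[
x_1\bigl(\log(k-1)-\log(k-2)\bigr)\le (\delta-\xi)n\log(k-1)+x_3\bigl(\log r-\log(k-1)\bigr).
\]
Using the explicit form of $\delta$, this will yield $x_1\le n/(6k^3\log n)+O(n^{1-1/k})$ and consequently $\xi n+x_1\le n/(6k^3\log n)+O(n^{1-1/k})$.

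Next, I would isolate the dominant palette. For each $(k-1)$-subset $F\subseteq[r]$, set $Y_F=\{x\in X_2:P(x)=F\}$, choose $F^*=\{c_1,\dots,c_{k-1}\}$ maximising $|Y_F|$, and write $W=Y_{F^*}$. Since
\[
|\overline W|=\xi n+x_1+x_3+|X_2\setminus W|,
\]
the lemma's conclusion $|\overline W|<n/(3k^3\log n)$ reduces, via the first step and $x_3<2kn^{(k-1)/k}$, to the bound $|X_2\setminus W|=O(n^{1-1/k})$.

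The third step is rainbow counting. For any $z\in X_2\setminus W$, $P(z)$ is a $(k-1)$-set different from $F^*$, so $P(z)\setminus F^*\ne\emptyset$. Every $k$-AP consisting of $z$ together with $k-1$ elements of $W$ then provides a rainbow $k$-AP subtemplate of $P$: colour $z$ by some $c\in P(z)\setminus F^*$ and the remaining members by the $k-1$ distinct colours of $F^*$. Applying Algorithm~\ref{algorithm 1} with $a=z$ and $B=W$ (and the sign of $D$ chosen according to whether $z<n/2$), the number of such $k$-APs is at least $n/(2(k-1))-(k-1)|\overline W|$. Provided $|\overline W|\le n/(4(k-1)^2)$, this is $\Omega(n)$ per $z$, and summing over $z$ together with the good-template bound $R_k(P)<k^{-1}n^{2-1/k}$ gives the target $|X_2\setminus W|=O(n^{1-1/k})$.

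The hard part will be justifying the hypothesis $|\overline W|\le n/(4(k-1)^2)$ required above, since $|\overline W|$ itself involves $|X_2\setminus W|$. I plan to break this apparent circularity by a short bootstrap. Pigeonhole on the $\binom{r}{k-1}$ palettes of size $k-1$ gives $|W|\ge x_2/\binom{r}{k-1}$, so by the first step $|W|\ge c_0 n$ for some constant $c_0=c_0(r,k)>0$. Running the same rainbow counting with $B=X_2$ (whose complement in $[n]$ is $o(n)$ by the first step) and subtracting the at most $|Y_{F'}|$ $k$-APs through $z\in Y_{F'}$ lying entirely in $Y_{F'}$ produces $\Omega(n)$ rainbow $k$-APs per $z$ whenever $|Y_{F'}|$ is a positive constant fraction of $n$. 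This forces every $|Y_{F'}|$ with $F'\ne F^*$ to shrink to $o(n)$ after one iteration, and a second iteration then drives $|\overline W|$ below the threshold $n/(4(k-1)^2)$ at which the clean estimate of the third step applies directly.
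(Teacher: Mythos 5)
Your overall architecture mirrors the paper's proof: isolate the dominant $(k-1)$-palette $W = Y_{F^*}$, show that $|X_2\setminus W|$ is small by counting rainbow $k$-AP subtemplates through elements of $X_2\setminus W$, and use a preliminary $o(n)$ bound (your ``bootstrap,'' the paper's intermediate Claim) before running the final Algorithm~\ref{algorithm 1} count that yields $O(n^{1-1/k})$. Steps 1, 2 and 3 are sound and essentially coincide with the paper. The gap is entirely in the bootstrap.

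Your bootstrap claims that running Algorithm~\ref{algorithm 1} with $B=X_2$ and subtracting ``the at most $|Y_{F'}|$ $k$-APs through $z\in Y_{F'}$ lying entirely in $Y_{F'}$'' produces $\Omega(n)$ rainbow $k$-APs \emph{per} $z$ whenever $|Y_{F'}|=\Omega(n)$. That deduction fails. The algorithm returns roughly $n/(2(k-1))$ admissible differences, minus at most $(k-1)|\bar X_2|=o(n)$ to keep the $k$-AP inside $X_2$, and then you must also remove the differences for which the \emph{entire} $k$-AP stays inside $Y_{F'}$. That bad set can be as large as $\min\bigl(|Y_{F'}|,\, n/(2(k-1))\bigr)$, and since the only a priori constraint is $|Y_{F'}|\le n/2$, the bad set can exhaust the whole difference range. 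Concretely, if $Y_{F'}$ contains an interval $(z, z+n/(2(k-1)))$ then every admissible $d$ with $z<n/2$ gives a $k$-AP entirely in $Y_{F'}$, so your per-$z$ count after subtraction can be zero. This is not merely a technicality: showing it cannot happen for a positive fraction of the $z$'s is precisely the content that is missing.

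The paper circumvents this by pinning \emph{two} elements of the $k$-AP into distinct palette classes, so that no subtraction is needed. It partitions $X_2$ into $6k$ consecutive blocks $V_1,\dots,V_{6k}$ of roughly equal size, finds an index $j_0$ such that $V_{j_0}$ carries $\Omega(n)$ elements of $Y_{C_0}$ and $V_{j_0+1}$ carries $\Omega(n)$ elements of $Y'$ (or vice versa), and then considers, for each pair $(a,b)\in Y_1\times Y_2$ with $Y_1\subseteq Y_{C_0}$, $Y_2\subseteq Y'$ both inside $V_{j_0}\cup V_{j_0+1}$, the unique $k$-AP $K_{a,b}$ with endpoints on $a,b$ and common difference $|a-b|$. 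Because $|a-b|<|V_{j_0}\cup V_{j_0+1}|+|\bar X_2|<n/(2(k-1))$ the progression stays inside $[n]$; because $a,b$ lie in different palette classes of size $k-1$, any $K_{a,b}\subseteq X_2$ automatically contains a rainbow subtemplate; and because each $x\in\bar X_2$ lies in only $O(1)$ of the $K_{a_0,b}$ with a fixed $a_0$, the number of good pairs is $\Omega(n^2)$, contradicting $R_k(P)<k^{-1}n^{2-1/k}$. If you want to keep a per-$z$ count, you would have to replace the crude bound ``at most $|Y_{F'}|$'' by an average over $z$ and argue that the bad differences cannot dominate for most $z$; the block-partition device is the clean way to make that averaging rigorous, and it is the ingredient your proposal is missing.
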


\begin{proof}
Note that  $\sum_i x_i=|S|=(1-\xi)n$. By Inequality (\ref{eq:x3lb}), we get
\begin{align*}
	x_1<\frac {x_3(\log r-\log(k-1))-(\xi- \delta)n \log(k-1)}{\log(k-1)-\log(k-2)}.
\end{align*}
Moreover, by Lemma~\ref{x_3} we have $x_3< 2kn^{1-1/k}$. By combining the two inequalities,  we get
\begin{equation}\label{eq:x2lb}
x_2=(1-\xi)n-(x_1+x_3) >
(1-\frac{\log(k-1)}{\log(k-1)-\log(k-2)}\delta-\frac{\log r-\log(k-2)}{\log(k-1)-\log(k-2)} 2k n^{-1/k})n.
\end{equation}
Recalling that $\delta=\frac{1-\log(k-2)\cdot \log^{-1}(k-1)}{6k^3\log n}$, we have $|\bar X_2|=n-x_2=o(n)$. For every $C\subset [r]$ with $|C|=k-1$, let $Y_{C}=\{x\in X_2 : P(x)=C\}$.
By the pigeonhole principle, there exists a $C_0\subset [r]$ with $|C_0|=k-1$ such that
$|Y_{C_0}|\ge x_2/{\binom{r}{k-1}}\ge r^{1-k}n$. Let $Y'=X_2\setminus Y_{C_0}$, then we have the following claim.
\begin{claim}
$|Y'|=o(n)$.
\end{claim}
\begin{proof}
Assume for the sake of contradiction that $|Y'|=\Omega(n)$. In this case, it is sufficient to show that $R_k(P)=\Omega(n^2)$, which contradicts the condition that $P$ is a good $r$-template for $S$.

Let us consider a partition of $X_2$ into $V_1,V_2, \dots, V_{6k}$ satisfies $||V_i|-|V_j||\le 1$ and $\max (V_i)<\min (V_j)$ for $1\le i<j\le 6k$. 
Then there is an $j_0\in [6k]$ such that either $|V_{j_0}\cap Y_{C_0}| = \Omega(n)$ and $|V_{j_0+1}\cap Y'| = \Omega(n)$ or
the other way around.
Indeed, since $|Y_{C_0}|=\Omega(n)$, $|Y'|=\Omega(n)$ and $Y_{C_0}\cup Y'=X_2$, either $|V_j \cap Y_{C_0}|=\Omega(n)$ or $|V_j \cap Y'|=\Omega(n)$ for every $j\in [6k]$.
For convenience, set $Y_1=Y_{C_0}\cap (V_{j_0}\cup V_{j_o+1})$ and $Y_2=Y'\cap (V_{j_0}\cup V_{j_o+1})$.
For any $a\in Y_1$ and $b\in Y_2$, if $\max\{a,b\}<n/2$, let $K_{a,b}$ denote the $k$-AP in $[n]$ with $\min\{a,b\}$ as the first term and $|a-b|$ as the common difference; otherwise, let $K_{a,b}$ denote the $k$-AP in $[n]$ with $\max\{a,b\}$ as the $k$th term and $|a-b|$ as the common difference.
Note that for such $K_{a,b}$, we have $P(a)=C_0$,  $P(b)\neq C_0$ and $|P(a)|=|P(b)|=k-1$.
If $K_{a,b}\subset X_2$, then $P$ contains at least one rainbow $k$-AP subtemplate $P'\subseteq P$ such that $|P'(x)|=1$ for $x\in K_{a,b}$ and $P'(y)=\emptyset$ for $y\in [n]\setminus K_{a,b}$ since $|\cup_{x\in  K_{a,b}} P(x)|\ge k$.
Therefore, we will count the number of $K_{a,b}$ in $X_2$.

Given $a_0\in Y_1$, let $\mathcal{K}_{a_0}=\{K_{a_0,b}\subset [n]: b\in Y_2\}$. Due to $|a_0-b|<|V_{j_0}\cup V_{j_o+1}|+n-x_2<\frac{n}{2(k-1)}$ for any $b\in Y_2$, we have $|\mathcal{K}_{a_0}|=|Y_2|$. For every family $\mathcal{K}_{a_0}$ we consider
$\mathcal{K}^{*}_{a_0}=\{K_{a_0,b}\in \mathcal{K}_{a_0}:  K_{a_0,b}\subset X_2\}$.
Recalling the definition of $K_{a,b}$, for any $x\in \bar X_2$, $x$ is contained in at most $4(k-2)$ copies of $K_{a_0,b}$ for some $b\in Y_2$,
so $|\mathcal{K}^{*}_{a_0}|\ge |Y_2|-4(k-2)|\bar X_2|$.
Let $\mathcal{K}^{*}=\cup_{a_0\in Y_1}\mathcal{K}^{*}_{a_0}$. Since $|\bar X_2|=o(n)$, $|Y_1|=\Omega(n)$ and $|Y_2|=\Omega(n)$, we have
\[
|\mathcal{K}^{*}|\ge \left(|Y_2|-4(k-2)|\bar X_2|\right)\frac{|Y_1|}{2}=\Omega(n^2),
\]
which yields $R_k(P)\ge |\mathcal{K}^{*}|=\Omega(n^2)$.
\end{proof}
Next, we will count the number of $k$-APs in $X_2$ such that the 1st term (or $k$th term) is in $Y'$ and the remaining items are in ${Y}_{C_0}$. Similar to the analysis of $K_{a,b}$,  every such $k$-AP corresponds to at least one rainbow $k$-AP subtemplate of $P$.
Using Algorithm~\ref{algorithm 1}, we input $a\in Y'$ and $B=Y_{C_0}$. If $a<n/2$, we take $D=\{d\in \mathbb{Z}: 0<d<\frac{n}{2k-2}\}$; otherwise, $D=\{d\in \mathbb{Z}: -\frac{n}{2k-2}<d<0\}$.
Since $|\bar X_2|=o(n)$ and $|Y'|=o(n)$, we have $|[n]\setminus {Y}_{C_0}|=|\bar X_2|+|Y'|=o(n)$. Similar to Lemma~\ref{x_3}, for every $a\in Y'$, the final $D^*$ satisfies $|D^*|>\frac{n}{k^2}$, which implies that there are at least $\frac{|Y'|n}{2k^2}$ $k$-APs $K$ such that $|K\cap Y'|=1$ and $|K\cap Y_{C_0}|=k-1$.
Since $P$ is a good $r$-template of $S$, we have
\[
k^{-1}n^{2-1/k}> R_k(P) > \frac{|Y'|n}{2 k^2},
\]
which indicates $|Y'|< 2kn^{1-1/k}$.
By (\ref{eq:x2lb}) and $\delta=\frac{1-\log(k-2)\cdot \log^{-1}(k-1)}{6k^3\log n}$, we have $|Y_{C_0}|=x_2-|Y'|> (1-1/(3k^3\log n))n$, which completes the proof.
\end{proof}
\smallskip

For $S\subseteq [n]$ and $C\subset [r]$ with $|C|=k-1$, we define
\[
\mathcal{P}_C:=\{P\in \mathcal{C}_2: |\{x\in S: P(x)=C\}|> (1-\lambda)n\}, \ \text{where} \ \lambda=1/(3k^3\log n).
\]
According to Lemma~\ref{k-1 palette number}, it is easy to check that $\{\mathcal{P}_C : C\in \binom{[r]}{k-1}\}$ forms a partition of $\mathcal{C}_2$. For every $C\subset [r]$ with $|C|=k-1$, we will give an upper bound of $g(\mathcal{P}_C, S)$.

\begin{lemma} \label{P-k-1-bound}
Let $r\ge k\ge 3$, $\delta=\frac{1-\log(k-2)\cdot \log^{-1}(k-1)}{6k^3\log n}$ and $0\leq \xi< \delta+2k(\log{r}-1)n^{-1/k}$. Then for $S\subseteq [n]$ with $|S|=(1-\xi)n$ and $C\subset [r]$ with $|C|=k-1$, we have
\[
g(\mathcal{P}_C, S)< (k-1)^{|S|} + (k-1)^{|S|- \frac{n}{3k^3}}.
\]
\end{lemma}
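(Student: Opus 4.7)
The plan is to split $G(\mathcal{P}_C, S)$ based on whether a coloring uses a color outside $C$. Let
\[
A := \{\chi \in G(\mathcal{P}_C, S) : \chi(S) \subseteq C\}, \qquad B := G(\mathcal{P}_C, S) \setminus A.
\]
The bound $|A| \leq (k-1)^{|S|}$ is immediate since $|C| = k-1$ and each element of $S$ has at most $k-1$ admissible colors. The remainder of the argument is devoted to showing $|B| \leq (k-1)^{|S|-n/(3k^3)}$; this is where the rainbow $k$-AP-free condition must be invoked.

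For any $\chi\in B$ there exists $P\in\mathcal{P}_C$ with $\chi\subseteq P$; writing $T_\chi:=\{x\in S:\chi(x)\notin C\}$, we have $T_\chi\subseteq S\setminus W(P)$ and hence $|T_\chi|<\lambda n$ by the definition of $\mathcal{P}_C$, where $\lambda=1/(3k^3\log n)$. I enumerate $\chi\in B$ by first choosing $t=|T_\chi|\in\{1,\dots,\lfloor\lambda n\rfloor\}$, then the set $T:=T_\chi$ (in $\binom{|S|}{t}$ ways), the coloring $\chi|_T:T\to[r]\setminus C$ (in at most $(r-k+1)^t$ ways), and finally the coloring $\chi|_{S\setminus T}:S\setminus T\to C$ making the full $\chi$ rainbow $k$-AP-free.

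The crux is bounding the number of admissible $\chi|_{S\setminus T}$. Fix some $z\in T$. For every $k$-AP $K\subseteq[n]$ with $z\in K$ and $K\setminus\{z\}\subseteq S\setminus T$, the restriction $\chi|_{K\setminus\{z\}}$ cannot be a permutation of $C$; otherwise, since $\chi(z)\notin C$, all $k$ colors on $K$ would be distinct and $K$ would be rainbow. To exploit these constraints I build a family $\mathcal{K}_z$ of such $k$-APs whose sets $K\setminus\{z\}$ are pairwise disjoint, via a greedy matching argument on the $(k-1)$-uniform hypergraph on $S\setminus(T\cup\{z\})$ with edge set $\{K\setminus\{z\}\}$. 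An Algorithm~\ref{algorithm 1}-style count shows this hypergraph has $\Omega(n)$ edges — the number of $k$-APs through $z$ meeting $T\cup([n]\setminus S)$ is $O(n/(k\log n))=o(n)$ — and any two elements of a $k$-AP determine it, so each vertex has degree at most $\binom{k}{2}$. Greedy matching thus gives $m:=|\mathcal{K}_z|=\Omega(n/k^3)$.

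Pairwise disjointness decouples the non-permutation constraints, hence the number of admissible $\chi|_{S\setminus T}$ is at most
\[
\bigl((k-1)^{k-1}-(k-1)!\bigr)^m (k-1)^{|S|-t-1-m(k-1)}=(k-1)^{|S|-t-1}(1-\alpha)^m,
\]
where $\alpha:=(k-1)!/(k-1)^{k-1}$. Combining the enumeration bounds yields
\[
|B|\leq (k-1)^{|S|-1}(1-\alpha)^m\sum_{t=1}^{\lfloor\lambda n\rfloor}\binom{|S|}{t}\Bigl(\frac{r-k+1}{k-1}\Bigr)^{t}.
\]
Standard entropy estimates, using $\lambda=1/(3k^3\log n)$, bound the binomial sum by $e^{o(n/k^3)}$; meanwhile $(1-\alpha)^m\le e^{-\alpha m}$ is exponentially small in $n$. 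The main obstacle is the final constant-chasing step: one must verify that the greedy matching constant in $m\ge c_k n/k^3$ combined with $\alpha$ gives $\alpha c_k\ge \log(k-1)/3$, so that $(1-\alpha)^m\le (k-1)^{-n/(3k^3)}$ after absorbing the $e^{o(n/k^3)}$ slack. Once this is checked, adding $|A|$ and $|B|$ yields the claimed inequality.
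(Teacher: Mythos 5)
Your decomposition and enumeration are essentially the paper's own proof: your $A,B$ are its $\mathcal{G}_0,\mathcal{G}_1$, your $T$ and $z$ play the roles of $S_1$ and $a'$, and your greedy matching reproduces its family of pairwise disjoint sets $K^-_{a'}$ (the paper bounds overlaps by $(k-1)(k-2)+1$ rather than citing a degree bound, getting $m\ge n/(2k^3)$). The genuine gap is exactly the step you leave unchecked, and it does not check out. With the honest per-block count, each disjoint block contributes a factor $(k-1)^{k-1}-(k-1)!=(1-\alpha)(k-1)^{k-1}$, $\alpha=(k-1)!/(k-1)^{k-1}$, so to reach the exponent $n/(3k^3)$ you need (up to the $e^{o(n/k^3)}$ slack) $m\ln\frac{1}{1-\alpha}\ge \frac{\ln(k-1)}{3k^3}\,n$. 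Already for $k=4$ this demands $m\ge n/44$ (since $\ln(9/7)\approx 0.251$ and $\ln 3/192\approx 1/175$), whereas the paper's overlap count gives only $m\ge n/49$ and your $\binom{k}{2}$ degree bound gives even less; and for all large $k$ the requirement is unattainable for \emph{any} choice of disjoint blocks, because disjointness forces $m\le n/(k-1)$ while $\alpha$ decays exponentially in $k$, so $m\ln\frac1{1-\alpha}=O(\alpha n/k)=e^{-\Omega(k)}n\ll \ln(k-1)\,n/(3k^3)$. Hence your argument proves only $g(\mathcal{P}_C,S)<(k-1)^{|S|}+(k-1)^{|S|-c(k)n}$ with $c(k)$ exponentially small in $k$, not the stated bound. (A small separate slip: the free exponent should be $|S|-t-m(k-1)$, not $|S|-t-1-m(k-1)$, since $z\in T$.)

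You have in fact put your finger on the delicate point rather than missed a trick: at the corresponding step the paper simply asserts that the number of ways to colour $S_2$ is at most $(k-1)^{|S|-n/(2k^3)}$, i.e.\ a saving of a full factor $k-1$ per disjoint block, but the ``two equal colours'' constraint only excludes $(k-1)!$ of the $(k-1)^{k-1}$ colourings of a block, and $(k-1)^{k-1}-(k-1)!>(k-1)^{k-2}$ for every $k\ge4$, so that assertion is justified by the stated reasoning only when $k=3$. To complete your write-up you must therefore either (a) accept a weaker, $k$-dependent constant of order $\alpha/k^3$ in the exponent --- which is actually enough for the only use of Lemma~\ref{P-k-1-bound}, since in the proof of Theorem~\ref{up-bound-dense} any saving of the form $(k-1)^{-c(k)n}$ is absorbed by the dominant $\mathcal{C}_1$ error term, whose exponent saving is only $\Theta(n/\log n)$ for fixed $k$ --- or (b) extract more than one non-permutation constraint per block (e.g.\ overlapping $k$-APs through $z$, or several elements of $T$ when $|T|\ge 2$), which needs an argument beyond the independent product bound. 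As written, the final inequality of the proposal is not merely unverified; it is false for $k\ge 4$.
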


\begin{proof}
For any rainbow $k$-AP-free $r$-coloring $\chi \in G(\mathcal{P}_C, S)$, let $S_\chi=\{x\in S: \chi(x)\notin C\}$.
By the definition of $\mathcal{P}_C$, we have $|S_\chi|< \lambda n$.
Let
\[\mathcal{G}_0=\{\chi \in G(\mathcal{P}_C, S) : S_\chi=\emptyset\}
 \quad\text{and}\quad
\mathcal{G}_1=\{\chi \in G(\mathcal{P}_C, S) : S_\chi\neq \emptyset\}.\]
Clearly, $g(\mathcal{P}_C, S)=|\mathcal{G}_0| + |\mathcal{G}_1|\le (k-1)^{|S|}+|\mathcal{G}_1|$. It remains to show that $|\mathcal{G}_1|<(k-1)^{|S| - \frac{n}{3k^3}}$.

Let us consider the possible $r$-colorings in $\mathcal{G}_1$. We first choose a subset $S_1\subseteq S$ with $0<|S_1|<\lambda n$, and use colors in
$[r]\setminus C$ to color a fixed $S_1$. Then the number of choices of $S_1$ is at most $\sum_{1\leq \ell < \lambda n}\binom{n}{\ell}$, and the number of colorings is at most  $(r-k+1)^{|S_1|}$.
After we fix a subset $S_1$ and its coloring, we consider the number of ways to color $S\setminus S_1$ using colors in $C$. Set $S_2= S\setminus S_1$.

For any fixed $a'\in S_1$, if $a'<n/2$, let $K_{a'}$ denote the $k$-AP in $S_2\cup \{a'\}$ with ${a'}$ as the 1st term, otherwise, let $K_{a'}$ denote the $k$-AP in $S_2\cup \{a'\}$ with $a'$ as the $k$th term. By symmetry, let $a'<n/2$ and $\mathcal K_{a'}$ denote the family of such $K_{a'}$.
Using Algorithm~\ref{algorithm 1}, we take $a=a'$, $D=\{d\in \mathbb{Z}: 0<d<\frac{n}{2(k-1)}\}$ and $B=S_2$. Since $\delta= \frac{1-\log(k-2)\cdot \log^{-1}(k-1)}{6k^3\log n}$ and $\xi< \delta+2k(\log{r}-1)n^{-1/k}$, we have $|[n]\setminus S_2|=n-|S|+|S_1|<2\lambda n$. Recalling Algorithm~\ref{algorithm 1}, we have 
\[
|\mathcal K_{a'}|\ge \frac{n}{2(k-1)}-(k-1)\cdot|[n]\setminus S_2|>\frac{n}{2(k-1)}-2(k-1)\lambda  n>\frac{n}{2k-1},
\]
where the last inequality follows from $\lambda=1/(3k^3\log n)$.

Next fix a $K\in \mathcal K_{a'}$, then the number of $K'\in \mathcal K_{a'}$ with $|K\cap K'|\ge 2$ is at most $(k-1)(k-2)+1 =k^2-3k+3$, since the $i$th term of $K$ with $2\le i\le k$ may be the $j$th term of $K'$ with $K'\neq K$ and $j\in [k]\setminus \{1,i\}$.  
Let $K^-_{a'}=K_{a'}\setminus \{a'\}$. Then the number of disjoint $K^-_{a'}$ is at least
\[
\frac{n}{2k-1}\cdot\frac{1}{k^2-3k+3}>\frac{1}{2k^3}n.
\]
In addition, for a $K\in \mathcal K_{a'}$ and a rainbow $k$-AP-free $r$-coloring $\chi$ of $S$ in $\mathcal{G}_1$, due to  $\chi(a')\notin C$ and $\chi(x)\in C$ for $x \in K\setminus \{a'\}$, there must be two integers $x_1, x_2\in K$ such that $\chi(x_1)=\chi(x_2)$.
Therefore, the number of ways to color $S_2$ is at most $(k-1)^{|S|-(1/2k^3) n}$.
Hence, we obtain that
\[
|\mathcal{G}_1|\leq
\sum_{1\leq \ell < \lambda n}\binom{n}{\ell} (r-k+1)^{\lambda n} (k-1)^{|S|-(1/2k^3) n}
< (k-1)^{\lambda n\frac{\log n+ \log r}{\log (k-1)}+ |S|-(1/2k^3) n} < (k-1)^{|S| - \frac{n}{3k^3}},
\]
where the last inequality follows from $\lambda=1/(3k^3\log n)$, which completes the proof.
\end{proof}

Now we have all the ingredients to give our proof of Theorem~\ref{up-bound-dense}.

\begin{proof}[{\bf Proof of Theorem~\ref{up-bound-dense}}]
Let $\mathcal{C}$ be the collection of $r$-templates of order $n$ given by Corollary~\ref{r-k-ap-container}, and $S\subseteq [n]$ with $|S|\ge (1-\xi)n$ and $0\le \xi \le r^{-3}/2$.
By Property (iii) of Corollary~\ref{r-k-ap-container} and the definition of $\mathcal{C}_1$ (see~(\ref{def:coll})), we have
\[
g(\mathcal{C}_1, S) \leq |\mathcal{C}_1|\cdot (k-1)^{(1-\delta)n} \leq | \mathcal{C}|\cdot (k-1)^{(1-\delta)n} <(k-1)^{n-\frac{6\delta n}{7}}=(k-1)^{n-\frac{1-\log(k-2)\cdot \log^{-1}(k-1)}{7k^3\log n}n}.
\]
If $\xi\ge  \delta+2k(\log{r}-1)n^{-1/k}$, by Lemma~\ref{x_3}, we are done by $g(\mathcal{C},S)=g(\mathcal{C}_1, S)$.
Otherwise, by Lemma \ref{k-1 palette number} and Lemma \ref{P-k-1-bound}, we have
\begin{align*}
g(\mathcal{C}, S) &= g(\mathcal{C}_{1}, S)+g(\mathcal{C}_{2}, S)\\
&\leq (k-1)^{n-\frac{6\delta n}{7}}+\binom{r}{k-1}(k-1)^{|S|}(1 + (k-1)^{- \frac{n}{3k^3}})\\
&\leq  \binom{r}{k-1}(k-1)^{|S|}+(k-1)^{-\frac{1-\log(k-2)\cdot \log^{-1}(k-1)}{8k^3\log n}n} (k-1)^n,
\end{align*}
which gives the desired upper bound on the number of rainbow $k$-AP-free $r$-colorings of $S$.
\end{proof}

\section{Proof of Theorem~\ref{r-extremal}}\label{prove-r-extremal}
In this section, we shall prove Theorem~\ref{r-extremal} using Theorem~\ref{l-bound}, Theorem~\ref{up-bound-dense} and Theorem~\ref{Sz-thm}.

\begin{proof}[{\bf Proof of Theorem~\ref{r-extremal}}]
By Theorem~\ref{l-bound}, we first have
\begin{equation}\label{g(n)lowbound}
g_{r,k}([n])\ge \sum_{t=1}^{k-1}\left(t^{n}\sum_{j=t}^{k-1}\binom{r}{j}\binom{j}{j-t}(-1)^{j-t}\right)\ge
\left(\binom{r}{k-1}-o(1)\right)\left(k-1 \right)^n.
\end{equation}
For any subset $S\subseteq [n]$ with $(1-r^{-3}/2)n\le |S|<n$, by Theorem~\ref{up-bound-dense}, we have
\[
g_{r,k}(S)\le \binom{r}{k-1}(k-1 )^{n-1}+(k-1)^{(1-\frac{1-\log(k-2)\cdot \log^{-1}(k-1)}{8k^3\log n})n}\le \left(\frac{\binom{r}{k-1}}{k-1}+o(1)\right)(k-1)^{n}< g_{r,k}([n]).
\]
For any subset $S\subseteq [n]$ with $|S|<\frac{n}{\log r}$, we have
\[
g_{r,k}(S)\le r^{|S|}\le (k-1)^{|S|\log r}<(k-1)^n< g_{r,k}([n]).
\]

Next, suppose that $|S|=\gamma n$ with $\log^{-1} r<\gamma< 1-r^{-3}/2$.
Let integer $q = n_0$, where $n_0$ is obtained by using Theorem~\ref{up-bound-dense}. Recalling Theorem~\ref{Sz-thm}, we take $\alpha= \frac{1}{3r^3\log r}$ and $n\ge sz(q, \alpha)$. Then $S$ contains at least $\frac{(\gamma - \alpha)n}{q}$ pairwise disjoint $q$-APs. For any $q$-AP $Q=\{a, \dots, a+(q-1)d\}$, let $\varphi$ be a mapping from $[q]$ to $Q$ such that $\varphi(i)= a+ (i-1)d$ for every $i \in [q]$. Then there is a bijection $\phi$ between all the
$k$-APs in $[q]$ and all the $k$-APs in $Q$ such that $\phi(\{a_1,\dots,a_k\}) = \{\varphi(a_1), \dots, \varphi(a_k)\}$ for any $k$-AP $\{a_1,\dots,a_k\}$ in $[q]$.
Therefore, $g_{r,k}(Q) = g_{r,k}([q])$. By the choice of $q$ and Theorem~\ref{up-bound-dense}, we have

\[
g_{r,k}(Q) = g_{r,k}([q])\le \binom{r}{k-1}(k-1)^{q}+(k-1)^{\big(1-\frac{1-\log(k-2)\cdot \log^{-1}(k-1)}{8k^3\log q}\big)q}< (k-1)^{q+(k-1)\log r},
\]
which yields 
\begin{align*}
g_{r,k}(S) &= r^{\alpha n}((k-1)^{q+(k-1)\log r})^{\frac{(\gamma-\alpha)n}{q}}
< (k-1)^{{\alpha n}\log r+(\gamma-\alpha)n+ \frac{(\gamma-\alpha)k\log r}{q}n}\\
&\le (k-1)^{ \gamma n+ \alpha n \log r+ \frac{\gamma k\log r}{q}n}
\le (k-1)^{ (1-\frac{1}{2r^3}) n+ \frac{1}{3r^3} n + \frac{\gamma k\log r}{n_0}n}\\
&<(k-1)^n < g_{r,k}([n]).
\end{align*}

Recalling Theorem~\ref{up-bound-dense}, we have
$g_{r,k}([n])\le (\binom{r}{k-1}+o(1))(k-1)^n$.
Combining the lower bound in (\ref{g(n)lowbound}), we can obtain immediately
\[
\lim_{n\to \infty} \frac{g_{r,k}([n])}{(k-1)^n}=\binom{r}{k-1}.
\]
\end{proof}

\section{Concluding Remarks}\label{remarks}
In this paper, we study rainbow Erd\H{o}s-Rothschild problem for $k$-AP-free set in $[n]$.
In fact, our results can be extended to cyclic groups.
Let $\mathbb{Z}_n$ be the \textit{cyclic group} of order $n$ formed by the set $\{0, 1, \dots , n-1\}$ with the binary operation addition modulo $n$.
For any $S\subseteq \mathbb{Z}_n$ and $r\ge k \ge 3$, let $g_{r,k}(S,\mathbb{Z}_n)$ denote the number of rainbow $k$-AP-free $r$-colorings of $S$.
Since arithmetic progressions can ``wrap around" in $\mathbb{Z}_n$, we immediately have the following corollary.

\begin{corollary}\label{Z_n}
For all integers $r \ge k\ge 3$, there exists $n_0\in \mathbb{N}^*$ such that for all $n\ge n_0$ and any proper subset $S\subset \mathbb{Z}_n$, we have $g_{r,k}(S,\mathbb{Z}_n )<g_{r,k}(\mathbb{Z}_n,\mathbb{Z}_n)$.
Moreover,
\[
\lim_{n\to \infty} \frac{g_{r,k}(\mathbb{Z}_n,\mathbb{Z}_n)}{(k-1)^n}=\binom{r}{k-1}.
\]
\end{corollary}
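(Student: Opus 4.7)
The plan is to parallel the architecture already developed for $[n]$ and verify that each step carries over to $\mathbb{Z}_n$. The starting observation is that $\mathbb{Z}_n$ has strictly more $k$-APs than $[n]$ (the wrap-around ones), but still $\Theta(n^2)$ of them, and shares the trivial lower bound.

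First I would establish the lower bound. The combinatorial identity behind Theorem~\ref{l-bound} depends only on the cardinality of the ground set, so it yields directly $g_{r,k}(\mathbb{Z}_n,\mathbb{Z}_n)\ge f^{<k}(r,\mathbb{Z}_n) = \big(\binom{r}{k-1}-o(1)\big)(k-1)^n$: any $r$-coloring of $\mathbb{Z}_n$ using at most $k-1$ colors is automatically rainbow $k$-AP-free.

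Second, I would build the cyclic version of the container Corollary~\ref{r-k-ap-container}. Define the hypergraph $\mathcal{H}^{\mathbb{Z}_n}_k$ on $\mathbb{Z}_n\times[r]$ whose edges are the $k$-tuples $\{(a_i,c_i)\}_{i=1}^{k}$ with $\{a_1,\dots,a_k\}$ a $k$-AP in $\mathbb{Z}_n$ and the $c_i$ pairwise distinct. The number of $k$-APs in $\mathbb{Z}_n$ is $\Theta(n^2)$ and the codegrees $\Delta_j$ are bounded by the same $\Theta(r^{k-2})$ bounds as in~(1). Therefore the same choice of $\varepsilon$ and $\tau$ gives a container family satisfying the cyclic analog of (i)--(iii). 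I would then transfer Lemmas~\ref{x_3},~\ref{k-1 palette number},~\ref{P-k-1-bound} verbatim; in fact the iterative Algorithm~\ref{algorithm 1} becomes \emph{simpler} in $\mathbb{Z}_n$ because the case distinction $a<n/2$ versus $a\ge n/2$, which was present only to avoid running off the boundary of $[n]$, is no longer needed: any common difference in $(0,n/(2(k-1)))$ produces a genuine $k$-AP, so the lower bound $|D^*|\ge n/k^2$ still holds after removing the forbidden $|\bar X_2|$ elements. Supersaturation (Theorem~\ref{supersaturation}) has a well-known cyclic analog, obtained either from Szemer\'edi's theorem in $\mathbb{Z}_n$ or by a standard averaging over translates; I would simply invoke it.

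Third, for the extremal statement I would repeat the case split of the proof of Theorem~\ref{r-extremal}. For $|S|<n/\log r$, the trivial bound $g_{r,k}(S,\mathbb{Z}_n)\le r^{|S|}<(k-1)^n$ suffices. For $|S|\ge(1-r^{-3}/2)n$ the cyclic analog of Theorem~\ref{up-bound-dense} gives $g_{r,k}(S,\mathbb{Z}_n)<g_{r,k}(\mathbb{Z}_n,\mathbb{Z}_n)$. For intermediate $|S|$, Szemer\'edi's theorem in $\mathbb{Z}_n$ produces at least $(\gamma-\alpha)n/q$ pairwise disjoint $q$-APs inside $S$ with $q=n_0(r,k)$; choosing the parameters so that the common differences satisfy $qd<n$ avoids wrap-around within each $q$-AP, and the bijection $a+(i-1)d\mapsto i$ identifies $k$-APs in $Q$ with $k$-APs in $[q]$, giving $g_{r,k}(Q,\mathbb{Z}_n)\le g_{r,k}([q])$. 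The product estimate then yields $g_{r,k}(S,\mathbb{Z}_n)<(k-1)^n\le g_{r,k}(\mathbb{Z}_n,\mathbb{Z}_n)$ as in Theorem~\ref{r-extremal}. The limit formula follows by sandwiching the lower bound against the upper bound from the cyclic Theorem~\ref{up-bound-dense}.

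The one point demanding care, rather than a serious obstacle, is the bijection between a cyclic $q$-AP $Q$ and $[q]$: in $\mathbb{Z}_n$ the set $Q$ may contain spurious $k$-APs coming from the group structure (using common differences that depend on $n$, not on $d$), so one must either guarantee that the chosen $q$-APs do not wrap around or absorb the extra constraints by slightly enlarging $q$. Both fixes are routine and preserve the final strict inequality $g_{r,k}(S,\mathbb{Z}_n)<g_{r,k}(\mathbb{Z}_n,\mathbb{Z}_n)$.
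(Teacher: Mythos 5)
Your proposal is correct in substance, but it takes a much heavier route than the paper, which proves this corollary by pure transfer rather than by redoing the machinery. Identifying $\mathbb{Z}_n=\{0,\dots,n-1\}$ with $[n]$, every $k$-AP of $[n]$ remains a $k$-AP of $\mathbb{Z}_n$ (wrap-around only \emph{adds} progressions), so rainbow $k$-AP-freeness in $\mathbb{Z}_n$ is a stronger condition and $g_{r,k}(S,\mathbb{Z}_n)\le g_{r,k}(S)$ for every $S$; hence the quantitative upper bounds already proved for subsets of $[n]$ (Theorem~\ref{up-bound-dense} for the dense range and the case analysis inside the proof of Theorem~\ref{r-extremal} for the sparse and intermediate ranges) apply verbatim to subsets of $\mathbb{Z}_n$, while the lower bound $g_{r,k}(\mathbb{Z}_n,\mathbb{Z}_n)\ge f^{<k}(r,\mathbb{Z}_n)=\bigl(\binom{r}{k-1}-o(1)\bigr)(k-1)^n$ is unchanged; sandwiching with $g_{r,k}(\mathbb{Z}_n,\mathbb{Z}_n)\le g_{r,k}([n])$ gives the limit. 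This is what the paper means by ``immediately'': no cyclic container theorem, cyclic supersaturation, or cyclic versions of Lemmas~\ref{x_3}--\ref{P-k-1-bound} are needed. Your rebuilt cyclic machinery does work and would additionally give a standalone $\mathbb{Z}_n$-analogue of Theorem~\ref{up-bound-dense} (potentially useful toward exact results in $\mathbb{Z}_n$), but a few of your ``verbatim'' claims need small adjustments rather than none: in the cyclic $k$-AP hypergraph, $\Delta_2$ picks up a factor of up to $k-1$ because $(j-i)d\equiv y-x \pmod n$ can have $\gcd(j-i,n)$ solutions, and the larger edge count changes the constant in container property (ii); these only perturb constants. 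Finally, your closing worry about spurious wrap-around $k$-APs inside a chosen $q$-AP $Q$ is not actually an obstacle: the bijection $\varphi(i)=a+(i-1)d$ sends every $k$-AP of $[q]$ to a genuine $k$-AP of $\mathbb{Z}_n$ contained in $Q$, and any extra cyclic APs in $Q$ only impose further constraints, so $g_{r,k}(Q,\mathbb{Z}_n)\le g_{r,k}([q])$ holds directly, which is the only inequality the product estimate requires.
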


Li, Broersma and Wang~\cite{2021Integer} determined the exact value of $g_{r,3}(\mathbb{Z}_p,\mathbb{Z}_p)$ for prime $p$, using the result of $3\le aw(\mathbb{Z}_p, 3) \le 4$, which was proved in~\cite{anti-r-AP}.  However,
for $k\ge 4$, $ aw(\mathbb{Z}_n, k)= n^{1-o(1)}$ for all large $n$ (see~\cite{anti-r-AP}).
Therefore, it would be an interesting project to study the exact value of $g_{r,k}(\mathbb{Z}_n,\mathbb{Z}_n)$ for $k\ge 4$.

Another direction is that one can consider more general form of this problem.
Note that $k$-APs can be viewed as the set of distinct-valued solutions of the following homogeneous system of $k-2$ linear
equations
\begin{equation}
{\left( \begin{array}{ccccccccc}
1 & -2 & 1  & 0 & 0 &\cdots & 0 & 0 & 0\\
0 & 1  & -2 & 1 & 0 &\cdots & 0 & 0 & 0\\
  &\    &\    &\ &\      & \ddots \\
0 & 0  & 0 & 0 & 0 &\cdots & 1  & -2 & 1
\end{array}
\right )}
{\left(x_1,x_2,\dots, x_k \right )^T
}
{=\bf{0}.
}
\end{equation}
We quote the definition from~\cite{Ann-Schacht}. More generally, for an $\ell\times k$ integer matrix $M$ let $\mathcal{S}_0(M)\subseteq [n]^k$ be the set of solutions $(x_1,x_2,\dots, x_k)$ with all $x_i$ being distinct of the homogeneous system of linear equations given by $M$.
We say $M$ is \textit{irredundant} if $\mathcal{S}_0(M)\neq\emptyset$. Moreover, an irredundant $\ell\times k$ matrix $M$ is \textit{density regular}, if for every $\alpha>0$ there exists
an $n_0$ such that for all $n\ge n_0$ and $S\subseteq [n]$ with $|S|\ge \alpha n$ we have $S^k\cap \mathcal{S}_0(M)\neq\emptyset$. In~\cite{regular-matrix}, Frankl, Graham and R\"{o}dl characterized irredundant density regular matrices.

Given an $\ell \times k$ irredundant density regular integer matrix $M$ and a set $S\subseteq [n]$, an $r$-coloring of $S$ is called a \textit{rainbow $M$-free} if it contains no rainbow $k$-element set $K$ such that $K\in \mathcal{S}_0(M)$, where a $k$-element set  is called \textit{rainbow} if its each element is assigned different colors under the $r$-coloring of $S$.
In this context, the rainbow Erd\H{o}s-Rothschild problem for $M$-free set in $[n]$ is a very natural one.

\begin{question}
Given an $\ell \times k$ irredundant density regular integer matrix $M$ and a positive integer $r$, which subset of $[n]$ admits the maximum number of rainbow $M$-free $r$-colorings?
\end{question}

Furthermore, the method  can be used to prove Theorem~\ref{up-bound-dense} can also prove the analogous results for other irredundant density regular integer matrix. However, the stability analysis on other parts would be very involved. A well-known concept in additive number theory is the notion of a Sidon set. A set $S\subseteq [n]$ is called a \textit{Sidon set} if all the sums $x_1+x_2$ with $x_1<x_2$ and $x_1, x_2\in S$, are distinct. In 1941, Erd\H{o}s and  Tur\'an~\cite{E-T-Sidon} showed that the
maximum possible size $f(n)$ of a Sidon subset of $[n]$ satisfies $f(n)\le \sqrt{n} +O(n^{1/4})$.
Clearly, every $[1, -1, 1, -1 ]$-free set in $[n]$ is a Sidon set.
From the proof of Theorem~\ref{up-bound-dense}, we have the following conjecture.

\begin{conjecture}
For any integer $r\ge 2$ and sufficiently large $n$,  $[n]$ is always the unique subset admitting the maximum number of rainbow $[1, -1, 1, -1 ]$-free $r$-colorings among all subsets of $[n]$. In particular, almost all rainbow $[1, -1, 1, -1 ]$-free $r$-colorings of $[n]$ use at most $3$ colors.
\end{conjecture}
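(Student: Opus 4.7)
The plan is to combine the lower bound of Theorem~\ref{l-bound} applied to $[n]$ with the upper bound of Theorem~\ref{up-bound-dense} and Szemer\'{e}di's Theorem~\ref{Sz-thm} to handle all possible sizes of a proper subset $S\subset [n]$. First I would extract from Theorem~\ref{l-bound} that the dominant term (coming from $t=k-1$) gives
\[
g_{r,k}([n])\ge \binom{r}{k-1}(k-1)^{n}-O\bigl((k-2)^{n}\bigr)=\bigl(\tbinom{r}{k-1}-o(1)\bigr)(k-1)^{n},
\]
which will serve as the benchmark for the separation statement and the lower half of the limit.

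Next, to show $g_{r,k}(S)<g_{r,k}([n])$ for every proper $S\subset [n]$, I would split into three regimes according to $|S|=\gamma n$. \textbf{Dense regime} ($(1-r^{-3}/2)n\le |S|\le n-1$): Theorem~\ref{up-bound-dense} applies directly, and the main term $\binom{r}{k-1}(k-1)^{|S|}\le \binom{r}{k-1}(k-1)^{n-1}$ is already smaller than the benchmark by a factor of about $1/(k-1)$, with the error term being negligible. \textbf{Sparse regime} ($|S|<n/\log r$): the trivial bound $g_{r,k}(S)\le r^{|S|}=(k-1)^{|S|\log r}<(k-1)^{n}$ suffices. \textbf{Intermediate regime} ($\log^{-1}r<\gamma<1-r^{-3}/2$): here I would fix $q=n_0$ from Theorem~\ref{up-bound-dense} and pick $\alpha=\frac{1}{3r^{3}\log r}$, then invoke Szemer\'{e}di's Theorem on $S$ with parameter $\alpha$ (applied not for $k$-APs but for $q$-APs), which produces at least $(\gamma-\alpha)n/q$ pairwise disjoint $q$-APs in $S$. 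A natural affine bijection identifies each such $q$-AP with $[q]$ and preserves $k$-APs, so any rainbow $k$-AP-free coloring of $S$ restricts to a rainbow $k$-AP-free coloring on each $q$-AP, yielding
\[
g_{r,k}(S)\le r^{\alpha n}\cdot g_{r,k}([q])^{(\gamma-\alpha)n/q}.
\]
Plugging in the bound $g_{r,k}([q])<(k-1)^{q+(k-1)\log r}$ from Theorem~\ref{up-bound-dense} and simplifying with the chosen $\alpha$ and $q$ gives $g_{r,k}(S)<(k-1)^{n}<g_{r,k}([n])$.

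Finally, for the asymptotic statement, the lower bound already established yields $\liminf g_{r,k}([n])/(k-1)^{n}\ge \binom{r}{k-1}$, while applying Theorem~\ref{up-bound-dense} with $S=[n]$ (so $\xi=0$) gives the matching $\limsup\le \binom{r}{k-1}$, and the two together force the stated limit.

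The main obstacle is the intermediate regime, where $|S|$ is a constant fraction of $n$ but bounded away from $n$: neither the sharp dense upper bound nor the trivial $r^{|S|}$ bound is strong enough on its own. The decisive idea is to use Szemer\'{e}di's Theorem to tile most of $S$ with disjoint $q$-APs and exploit the multiplicative upper bound $g_{r,k}(S)\le r^{\mathrm{leftover}}\cdot g_{r,k}([q])^{\#\mathrm{blocks}}$. Choosing $q$ large enough so that Theorem~\ref{up-bound-dense} applies inside each block, and $\alpha$ small enough so that the $r^{\alpha n}$ slack is absorbed, is the delicate balancing step; once the parameters are tuned so that the exponent is strictly less than $n$ for every $\gamma<1-r^{-3}/2$ bounded below by $1/\log r$, the argument closes.
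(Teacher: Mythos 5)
Your proposal does not address the stated conjecture. What you have written is a reproduction of the paper's proof of Theorem~\ref{r-extremal} for $k$-term arithmetic progressions, but the conjecture concerns rainbow colorings with no distinct-valued solution of the linear system given by the matrix $[1,-1,1,-1]$, i.e.\ the Sidon-type equation $x_1 - x_2 + x_3 - x_4 = 0$. This is a different linear system from any $k$-AP system (it is $1\times 4$, not $(k-2)\times k$ with the AP coefficient pattern), and a coloring can be rainbow $4$-AP-free without being rainbow $[1,-1,1,-1]$-free or vice versa. Every ingredient you invoke --- the lower bound of Theorem~\ref{l-bound}, the dense upper bound of Theorem~\ref{up-bound-dense}, and Szemer\'{e}di's Theorem~\ref{Sz-thm} --- is stated and proved for $k$-APs only, and $g_{r,k}(\cdot)$ is defined as counting rainbow $k$-AP-free colorings, not rainbow Sidon-equation-free ones. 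At no point do you mention the matrix $[1,-1,1,-1]$, the Sidon equation, or how to port any of the cited results to it.

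This is not a minor relabelling issue: the paper deliberately leaves the statement as a conjecture and explains why. The authors note that the container method behind Theorem~\ref{up-bound-dense} should extend to other irredundant density-regular matrices, ``however, the stability analysis on other parts would be very involved.'' The stability lemmas (Lemmas~\ref{x_3} and~\ref{k-1 palette number}, together with Algorithm~\ref{algorithm 1}) rely on AP-specific structure: a $k$-AP is parametrized by a first term and a common difference, a given element lies in a bounded number of $k$-APs with a prescribed first element, and shrinking the admissible difference set is a translation-invariant operation that controls how many progressions a small ``bad'' set can wreck. None of this transfers verbatim to distinct-valued solutions of $x_1+x_3=x_2+x_4$, and a proof of the conjecture would need, at minimum, a new container statement for a suitably built $4$-uniform hypergraph, a supersaturation result for distinct-valued Sidon solutions, and a replacement for the palette-stability argument. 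Your write-up supplies none of these, so the gap is the entire adaptation, not a fixable detail.
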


\section*{Acknowledgements}
The authors extend gratitude to the referees for their careful
peer reviewing and helpful comments.

\bibliographystyle{abbrv}
\bibliography{ref}

\end{document}